\numberwithin{equation}{section}
\date{30 October 2007; v2 15 May 2008}
\title{Noncommutative families of instantons \\[20pt] }
\author{Giovanni Landi$\strut^{1}$, Chiara Pagani$\strut^{2}$,
\\[15pt]
Cesare Reina$\strut^{3}$, Walter D. van Suijlekom$\strut^{4}$
\\[35pt]
$\strut^{1}$ Dipartimento di Matematica e Informatica, Universit\`a
di Trieste \\
Via A.Valerio 12/1, I-34127 Trieste, Italy\\
and INFN, Sezione di Trieste, Trieste, Italy
\\ landi@univ.trieste.it
\\[5pt]
$\strut^{2}$ Department of Mathematical Sciences, University of Copenhagen,
\\ Universitetsparken 5,  DK-2100 Copenhagen, Denmark
\\ pagani@math.ku.dk, \, pagani@sissa.it
\\[5pt]
$\strut^{3}$ International School for Advanced Studies, \\
Via Beirut
2-4, I-34014 Trieste, Italy \\ reina@sissa.it
\\[5pt]
$\strut^{4}$ IMAPP, Radboud Universiteit \\Toernooiveld 1, 6525 ED Nijmegen, The Netherlands\\
waltervs@math.ru.nl
}
\def\ii{\mathrm{i}}
\def\into{\hookrightarrow}
\newcommand{\nn}{\nonumber}
\def\bra#1{\left\langle #1\right|}
\def\ket#1{\left| #1\right\rangle}
\DeclareMathOperator{\Mat}{Mat}
\DeclareMathOperator{\ch}{ch}
\DeclareMathOperator{\tr}{tr}
\DeclareMathOperator{\SU}{SU}
\DeclareMathOperator{\su}{su}
\DeclareMathOperator{\SO}{SO}
\DeclareMathOperator{\Spin}{Spin}
\DeclareMathOperator{\Sp}{SO}
\DeclareMathOperator{\SL}{SL}
\DeclareMathOperator{\U}{U}
\DeclareMathOperator{\diag}{diag}
\newcommand{\uno}{z_1}
\newcommand{\due}{z_2}
\newcommand{\tre}{z_3}
\newcommand{\qu}{z_4}
\newcommand{\unob}{z_1^*}
\newcommand{\dueb}{z_2^*}
\newcommand{\treb}{z_3^*}
\newcommand{\qub}{z_4^*}
\newcommand{\wuno}{w_1}
\newcommand{\wdue}{w_2}
\newcommand{\wtre}{w_3}
\newcommand{\wqu}{w_4}
\newcommand{\wunob}{w_1^*}
\newcommand{\wdueb}{w_2^*}
\newcommand{\wtreb}{w_3^*}
\newcommand{\wqub}{w_4^*}
\newcommand{\ta}{\widetilde{\alpha}}
\newcommand{\tb}{\widetilde{\beta}}
\newcommand{\tx}{\widetilde{x}}
\newcommand{\ba}{a^*}
\newcommand{\bb}{b^*}
\newcommand{\bc}{c^*}
\newcommand{\bd}{d^*}
\newcommand{\bg}{g^*}
\newcommand{\IR}{\ensuremath{\mathbb{R}}}
\newcommand{\IZ}{\ensuremath{\mathbb{Z}}}
\newcommand{\IC}{\ensuremath{\mathbb{C}}}
\newcommand{\IH}{\ensuremath{\mathbb{H}}}
\newcommand{\II}{\ensuremath{\mathbb{I}}}
\newcommand{\IP}{\ensuremath{\mathbb{P}}}
\newcommand{\IT}{\ensuremath{\mathbb{T}}}
\newcommand{\ce}{\ensuremath{\mathcal{E}}}
\def\half{\tfrac{1}{2}}
\newcommand{\ot}{\otimes}
\newcommand{\pot}{\overset{.}{\otimes}}
\newtheorem{teo}{Theorem}[section]
\newtheorem{prop}[teo]{Proposition}
\newtheorem{cor}[teo]{Corollary}
\newtheorem{rem}[teo]{Remark}
\newtheorem{lemma}[teo]{Lemma}
\newcommand{\beq}{\begin{equation}}
\newcommand{\eeq}{\end{equation}}
\renewcommand{\bar}[1]{\overline{#1}}   %% Wide bar
\def\class{{(0)}}                       %% Classical coordinates
\def\dd{\mathrm{d}}                     %% Differential
\def\det{\mathrm{det}}                  %% Determinant
\def\E{\mathcal{E}}                     %% Curly E
\def\cg{\mathfrak{g}}                    %% Lie algebra
\def\HC{\textup{HC}}
\def\id{\mathrm{id}}                    %% identity
\def\M{\mathcal{M}}                     %% Calligraphic M
\def\SL{\mathrm{SL}}                    %% Special linear group
\def\S{S_\theta}                        %% Theta-sphere
\def\Sp{\mathrm{Sp}}                    %% Symplectic group
\def\K{\textup{K}}
\def\KK{\textup{KK}}
\newcommand{\A}{\mathcal{A}}
\newcommand{\B}{\mathcal{B}}
\newcommand{\C}{\mathcal{C}}
\begin{document}

\maketitle

\begin{abstract}
We construct $\theta$-deformations of the classical groups $\SL(2,\IH)$ and $\Sp(2)$.   Coacting on a basic instanton on a noncommutative four-sphere
$S^4_\theta$, we construct a noncommutative family of instantons of charge 1. The family  is parametrized by the quantum quotient of
$\SL_\theta(2,\IH)$ by $\Sp_\theta(2)$.
\end{abstract}

\thispagestyle{empty}

\newpage

\section{Introduction}

Self-dual (and anti-self-dual) solutions of Yang--Mills equations have played an important role both in mathematics and physics.
Commonly called (anti-)instantons, they are connections with self-dual
curvature on smooth G-bundles over a four dimensional compact manifold $M$. In particular, one considers $\SU(2)$ instantons on the sphere $S^4$.

General solutions are constructed by the ADHM method of \cite{adhm} and it is known \cite{ahs} that the moduli space of $\SU(2)$-instantons, with instanton charge $k$ -- the second Chern class of the bundle -- is a ($8k-3$)-dimensional manifold $\M_k$. For $k=1$ the  moduli space $\M_1$ is isomorphic to an open ball in $\IR^5$ \cite{ha} and, in this construction, generic moduli are obtained from the so-called basic instanton. The latter is though of as a quaternion line bundle over $\IP^1 \IH \simeq S^4$ with connection induced from $\IH^2$ by orthogonal projection. All other instantons of  charge 1 are obtained from the basic one with the action of the conformal group $\SL(2,\IH)$ modulo the isometry group $\Sp(2)=\Spin(5)$. The resulting homogeneous space is $\M_1$; it is also the space of quaternion Hermitean structures in $\IH^2$.

The attempt to generalize to noncommutative geometry the ADHM
construction of $\SU(2)$ instantons together with their moduli
space is the starting motivation  behind papers \cite{gw}, \cite{gw2},
\cite{lpr} and the present one.
A noncommutative principal fibration $\A(S^4_\theta) \into
\A(S^7_{\theta})$ which `quantizes' the classical $\SU(2)$-Hopf fibration over $S^4$, has been constructed in \cite{gw} on the toric noncommutative
four-sphere $S^4_\theta$.
The generators  of $\A(S^4_\theta)$ are the entries of a
projection $p$ which describes the basic instanton on
$\A(S^4_\theta)$.   That is, $p$ gives a projective module of
finite type $p [\A(S^4_\theta)]^4$ and a connection $\nabla = p \circ \dd$ on it, which has a self-dual
curvature and  charge 1, in some appropriate sense; this is the basic instanton. In \cite{gw2} infinitesimal instantons (`the tangent space to the moduli space') were constructed using infinitesimal conformal transformations, that is elements in a quantized enveloping algebra  $\U_\theta(so(5,1))$. In the present paper, we look at a global construction and obtain other charge 1 instantons by
`quantizing' the actions of the Lie groups $\SL(2,\IH)$ and $\Sp(2)$ on the basic instanton which enter the classical construction \cite{atiyah}.

The paper
is organized as follows. In Sect.~\ref{se:principal} we recall the structure of the $\SU(2)$-principal Hopf fibration $S^7_{\theta } \rightarrow S^4_\theta$.
Sect.~\ref{se:SL2H} is devoted to the construction of
$\theta$-deformations $\A(\SL_\theta(2,\IH))$ and
$\A(\Sp_\theta(2))$ of the corresponding classical groups, endowed
with Hopf algebra structures. The algebras $\A(S^7_\theta)$,
$\A(S^4_\theta)$ are then described as quantum homogeneous
spaces of $\A(\Sp_\theta(2))$ as illustrated at the end of the
section. In Sect.~\ref{se:inflated} we consider the coactions of $\A(\SL_\theta(2,\IH))$ and
$\A(\Sp_\theta(2))$ on the Hopf fibration $S^7_{\theta } \rightarrow S^4_\theta$. We use these coactions in Sect. \ref{se:family} to construct a noncommutative family of
instantons by means of the algebra given by the quantum quotient
 of $\A(\SL_\theta(2,\IH))$ by $\A(\Sp_\theta(2))$. This turns out to be a noncommutative algebra generated by 6 elements subject to a `hyperboloid' relation.  We finish by mentioning relations to the notion of quantum families of maps as proposed in \cite{woro,soltan} and by stressing some open problems.

\section{The principal fibration}% $S^7_{\theta } \rightarrow S^4_\theta$}
\label{se:principal}

The class of deformations that we work with is the one of `toric noncommutative spaces' introduced in \cite{cl} and further elaborated in \cite{cd}. In \cite{gw} a noncommutative principal fibration
$\A(S^4_\theta) \into \mathcal{A}(\S^7)$ was introduced and
infinitesimal instantons on it were constructed in \cite{gw2} using infinitesimal conformal transformations.
We refer to these latter papers for a detailed description of the inclusion $\A(S^4_\theta) \into \mathcal{A}(\S^7)$ as a noncommutative principal fibration (with classical $\SU(2)$ as structure group) and  for its use for noncommutative instantons. Here we limit ourself to a brief description.
The coordinate algebra $\mathcal{A}(\S^7)$ on the sphere $\S^7$ is the $*$-algebra generated by elements
$\{z_j, z_j^* ~; ~j=1,\dots, 4\}$ with relations
\beq \label{7sphere}
z_j z_k = \lambda_{j k} z_k z_j \; , \qquad
z_j^* z_k = \lambda_{k j} z_k z_j^* \; , \qquad
z_j^* z_k^* = \lambda_{j k} z_k^* z_j^* \; ,
\eeq 
and spherical relation $\sum z_j^* z_j  =1$.
The deformation matrix $(\lambda_{j k})$ is taken so to
allow an action by automorphisms of the undeformed group $\SU(2)$  on $\mathcal{A}(\S^7)$ and so that  the subalgebra of invariants under this action is identified with the coordinate algebra $\A(\S^4)$ of a sphere $\S^4$.
With deformation parameter $\lambda=e^{2 \pi \ii \theta}$
the $*$-algebra $\A(\S^4)$ is generated by a central element $x$ and
elements $\alpha,\beta,\alpha^*,\beta^*$ with commutation relations
\beq\label{4sphere}
\alpha \beta = \lambda \beta \alpha \; , \quad
\alpha^* \beta^* = \lambda \beta^* \alpha^*\; , \quad
\beta^* \alpha  = \lambda \alpha  \beta^* \; , \quad
\beta \alpha^* = \lambda \alpha^* \beta \;, \eeq and spherical
relation $\alpha^* \alpha + \beta^*\beta + x^2 =1$.
All this (including the relation between the deformation parameter for $\S^7$ and $\S^4$) is most easily seen by taking the generators of $\A(\S^4)$ as the entries of a projection which yields an `instanton bundle' over $\S^4$. Consider the matrix-valued function on $\S^7$ given by
\beq\label{utheta}
u=(\ket{\psi_1},\ket{\psi_2})=
\begin{pmatrix}
 \uno & -\dueb & \tre & -\qub \\
\due & \unob &  \qu &\treb
\end{pmatrix}^t
,\eeq
where $^t$ denotes matrix transposition, and  $\ket{\psi_1},\ket{\psi_2}$ are elements in the right $\A(S^7_\theta)$-module
$\IC^4 \ot \A(S^7_\theta)$. They are orthonormal with respect to the $\A(S^7_\theta)$-valued Hermitean
structure $\left\langle \xi , \eta \right\rangle= \sum \xi^*_j \eta_j$ and as a consequence,
$u^* u=\II_2$. Hence the
matrix
\beq\label{basicp}
p=uu^* = \ket{\psi_1} \bra{\psi_1}+ \ket{\psi_2} \bra{\psi_2}
\eeq
is a self-adjoint idempotent with entries  in  $\A(\S^4)$; we have explicitly:
\beq\label{ptheta}
p= \frac{1}{2}
\begin{pmatrix}
1 +x & 0 & \alpha & \beta
\\
0 & 1+x & -\mu\,  \beta^* & \bar\mu\, \alpha^*
\\
\alpha^* & -\bar\mu\,  \beta & 1-x &0
\\
\beta^* & \mu\,  \alpha &0 & 1-x
\end{pmatrix}
,\eeq
with $\mu=\sqrt{\lambda}=e^{\pi \ii \theta}$.
The generators of $\A(\S^4)$, bilinear in those of $\A(\S^7)$, are given by
\beq \label{s4ins7}
\alpha=2(\uno \treb + \due \qub) \; , \quad  \beta=2(-\uno \qu + \due \tre) \; ,
\quad
x=\uno\unob + \due \dueb - \tre \treb - \qu \qub \;.
\eeq
The defining relation of the algebra $\A(\S^7)$ can be given on the entries of the matrix $u$ in \eqref{utheta}. Writing $u= (u_{ia})$, with
$i,j=1,\ldots,4$ and $a=1,2$, one gets
\beq\label{commutus}
u_{ia}u_{jb}=\eta_{ji}u_{jb}u_{ia} \; .
\eeq
with $\eta=(\eta_{ij} )$ the matrix
\beq\label{eta}
\eta = \begin{pmatrix} 1 & 1 & \bar\mu & \mu \\ 1 & 1 & \mu & \bar\mu \\
 \mu & \bar{\mu} &1 & 1\\ \bar{\mu} & \mu &1 & 1 \end{pmatrix}.
\eeq
The deformation matrix $(\lambda_{j k})$ in \eqref{7sphere} is  just the above $\eta$ with entries rearranged.

The finitely generated projective $\A(S^4_\theta)$-module $\ce=p[\A(\S^4)]^4$ is isomorphic to the  module
of equivariant maps from $\mathcal{A}(\S^7)$ to $\mathbb{C}^2$ describing the vector bundle associated via the fundamental representation of $\SU(2)$. On
$\E$ one has the Grassmann connection
\beq\label{basic}
\nabla :=p \circ \dd: \E \rightarrow
\E \ot_{\A(S^4_\theta)} \Omega^1(S^4_\theta)
,\eeq
with $\Omega(S^4_\theta)$ a natural differential calculus on $S^4_\theta$. There is also a natural Hodge star operator $*_\theta$ (see below). The connection has a self-dual curvature $\nabla^2=p(dp)^2$, that is,
$$
*_\theta \left( p(\dd p)^2 \right)=p(\dd p)^2
;
$$
 Its `topological charge' is computed to be 1 by a noncommutative index theorem. This `basic' noncommutative instanton has been given a twistor description in \cite{brain}.

\bigskip

The two algebras $\A(\S^7)$ and $\A(S_\theta^4)$ can be
described in terms of a deformed (a `star') product on the undeformed algebras
$\A(S^7)$ and $\A(S^4)$. Both spheres $S^7$ and $S^4$ carry an action of the torus $\IT^2$, which is compatible with the action of $\SU(2)$ on the total space $S^7$. In other
words, it is an action on the principal $\SU(2)$-bundle $S^7 \to S^4$.
The action by automorphisms on the algebra
$\A(S^4)$ is given simply by
$$
\sigma_t: (x,\alpha,\beta) ~\mapsto~ (x, e^{2\pi \ii t_1}\alpha, e^{2\pi \ii t_2}\beta) ,
$$
for $t = (t_1,t_2) \in \IT^2$. Now, any polynomial in the algebra $\A(S^4)$ is  decomposed into elements which are homogeneous under this action.
An element $f_r  \in \A(S^4)$ is said to be homogeneous of bidegree $r=(r_1,r_2)\in \IZ^2 $ if
$$
\sigma_t(f_r)=e^{2\pi \ii (r_1 t_1 + r_2 t_2)} \,f_r ,
$$
and each $f \in \A(S^4)$ is a unique finite sum of homogeneous elements \cite{Ri93}. This decomposition corresponds to writing the polynomial $f$ in terms of monomials in the generators.

Let now $\theta = (\theta_{j k} = - \theta_{k j})$ be a real antisymmetric $2\times 2$ matrix (thus given by a single real number, $\theta_{12}=\theta$, say).
The $\theta$-deformation of $\A(S^4)$ is defined by replacing the ordinary product by a deformed product, given on homogeneous elements by
\beq
\label{eq:star-product}
f_r \times_\theta g_{s} := e^{ \pi \ii \theta (r_1s_2 - r_2 s_1)}f_r g_{s} ,
\eeq
and extended linearly to all elements in $\A(S^4)$. The vector space $\A(S^4)$ equipped with the product $\times_\theta$ is denoted by $\A(S^4_\theta)$. On the other hand, the algebra $\A(S^7)$ does not carry an action of this $\IT^2$ but   rather a lifted action of a double cover $2$-torus \cite{gw2}. Nonetheless, the lifted action still allows us to define the algebra $\A(\S^7)$ by endowing the vector space  $\A(S^7)$ with a deformed product similar to the one in \eqref{eq:star-product}. As the notation suggests, these deformed algebras are shown to be isomorphic to the algebras defined by the relations in equations \eqref{7sphere} and \eqref{4sphere}.

In fact, the torus action can be extended to forms and one also deforms the exterior algebra of forms via a product like the one in \eqref{eq:star-product} on spectral components so producing deformed exterior algebras $\Omega(S^4_\theta)$ and $\Omega(S^7_\theta)$. As for functions, these are isomorphic as vector spaces to their undeformed counterparts but endowed with a deformed product.

As mentioned, the spheres $\S^4$ and $\S^7$ are examples of toric noncommutative manifolds (originally called isospectral deformations \cite{cl}).
They have noncommutative geometries via spectral triples whose Dirac operator and Hilbert space of spinors are the classical ones: only the algebra of functions and its action on the spinors is changed. In particular, having an undeformed Dirac operator (or, in other words, an undeformed metric structure) one takes as a Hodge operator $*_\theta$ the undeformed operator on each spectral component of the algebra of forms.

\section{Deformations of the groups $\SL(2,\mathbb{H})$ and $\Sp(2)$}
\label{se:SL2H}

Our interest in deforming the groups $\SL(2,\mathbb{H})$ and $\Sp(2)$ is motivated by their use for the construction of instantons on $S^4$.
Classically, charge 1 instantons are generated from
the basic one by the action of the conformal group $\SL(2,\IH)$ of
$S^4$. Elements of the subgroup $\Sp(2)\subset \SL(2, \IH)$ leave
invariant the basic one, hence  to get
new instantons one needs to quotient $\SL(2, \IH)$ by the spin group $\Sp(2)\simeq \Spin(5)$. The resulting moduli space
of $\SU(2)$ instantons on $S^4$ modulo gauge transformations  is identified  (cf. \cite{atiyah}) with the five-dimensional quotient manifold
$\SL(2,\IH)/\Sp(2)$.

In a parallel attempt to generate instantons on $\A(\S^4)$,
we  construct a quantum group $\SL_\theta(2,\IH)$ and its quantum
subgroup $\Sp_\theta(2)$.  An infinitesimal construction was proposed in \cite{gw2} where a deformed dual enveloping algebra $\U_\theta(so(5,1))$ was used to generate infinitesimal instantons by acting on the basic instanton described above.

The  construction of Hopf algebras $\A(\SL_\theta(2,\IH))$ and $\A(\Sp_\theta(2))$ is a special case of the quantization of compact Lie groups using Rieffel's strategy in \cite{Ri93a}, and studied
for the toric noncommutative geometries in \cite{varilly}. Firstly, a deformed (Moyal-type) product $\times_\theta$ is constructed on the algebra of (continuous) functions $\A(G)$ on a compact Lie group $G$, starting with an action of a closed connected abelian subgroup of $G$ (usually a torus). The algebra $\A(G)$ equipped with the deformed product is denoted by $\A(G_\theta)$. Keeping the classical expression of the coproduct, counit and antipode on $\A(G)$, but now on the algebra $\A(G_\theta)$, the latter becomes a Hopf algebra. It is in duality with a deformation of the universal enveloping algebra $\U(\cg)$ of the Lie algebra $\cg$ of $G$. The Hopf algebra $\U(\cg)$ is deformed to $\U_\theta(\cg)$ by leaving unchanged the algebra structure while twisting the coproduct, counit and antipode. The deformation from $\U(\cg)$ to $\U_\theta(\cg)$ is implemented with a twist of Drinfel'd type \cite{Dr83,Dr90} -- in fact, explicitly constructed in \cite{Re90} for the cases at hand -- as revived in \cite{sitarz}.

The deformed enveloping algebra $\U_\theta(so(5,1))$ was explicitly constructed in \cite{gw2}. We now briefly discuss the dual construction for the Lie group $\SL(2,\IH)$. The torus $\IT^2$ is embedded in $\SL(2,\IH)$ diagonally, 
$$
\rho(t) =\diag
\left(e^{2\pi \ii t_1}, \, e^{2\pi \ii t_2}\right),
\qquad \mathrm{for}\quad t=(t_1,t_2)\in\IT^2 ,
$$
and the group $\IT^2 \times \IT^2$ acts on $\SL(2,\IH)$ by
\beq\label{act;co}
(s,t,g) \in \IT^2 \times \IT^2 \times \SL(2,\IH) \mapsto \rho(s) \cdot g \cdot \rho(t)^{-1} \in \SL(2,\IH).
\eeq
Similar to the case of the spheres in Sect.~\ref{se:principal}, any function $f \in \A(\SL(2,\IH))$ expands in a series $f= \sum_r f_r$ of homogeneous elements for this action of $\IT^4$, but now $r=(r_1,r_2,r_3,r_4)$ is a multidegree taking values in $\IZ^4$. A deformed product $\times_\theta$ is defined by an analogue of formula \eqref{eq:star-product} on homogeneous elements:
$$
f_r \times_\theta g_s = e^{\pi i \theta (r_1 s_2 - r_2 s_1 + r_3 s_4 - r_4 s_3)} f_r g_s,
$$
and extended by linearity to the whole of $\A(\SL(2,\IH))$.
The resulting deformed algebra $\A(\SL_\theta(2,\IH))$, endowed with the classical (expressions for the) coproduct  $\Delta$, counit $\epsilon$ and antipode $S$ becomes a Hopf algebra.

In fact, to avoid problems coming from the noncommutativity of quaternions, we shall think of elements in $\IH$ as 2 by 2 matrices over $\IC$ via the natural inclusion
$$
\IH \ni q = c_1 + c_2 j \quad\mapsto\quad \begin{pmatrix} c_1 & c_2 \\ -\bar{c}_2 & \bar{c}_1 \end{pmatrix} \in \Mat_2(\IC), \qquad \mathrm{for} \quad c_1,c_2 \in \IC.
$$

In the present paper, we need not only the Hopf algebra $\A(\SL_\theta(2,\IH))$ but also its coaction on the principal bundle $\A(S^4_\theta) \into \mathcal{A}(\S^7)$, and in turn on the basic instanton connection \eqref{basic} on the bundle, in order to generate new instantons. Having this fact in mind we proceed to give an explicit construction
of $\A(\SL_\theta(2,\IH))$ out of its coaction in a way that also shows its
{\it quaternionic} nature.

\subsection{The quantum group $\SL_\theta(2,\IH)$}
\label{sse:SL2H}

For the deformation of the {\it quaternionic} special linear group $\SL(2,\IH)$, we start from the algebra of a two-dimensional deformed quaternionic space $\IH^2_\theta$. Let $\A(\IC^4_\theta)$ be the $*$-algebra generated by elements $\{z_j, z_j^* ~;~ a=1,\ldots,4 \}$ with the relations as in equation \eqref{7sphere} (for the specific value of the deformation parameter $\lambda$ considered in Sect.~\ref{se:principal} and obtained from \eqref{eta} as mentioned there) but without the spherical relation that defines $\A(S^7_\theta)$. We take $\A(\IH_\theta^2)$ to be the algebra $\A(\IC^4_\theta)$ equipped with the antilinear $*$-algebra map $j:\A(\IC^4_\theta) \to \A(\IC^4_\theta)$ defined on generators by
\begin{align*}
\label{eq:j}
j: (z_1, z_2, z_3, z_4) \mapsto (\due, -\uno, \qu, -\tre).
\end{align*}
It is worth stressing that this deformation of the quaternions takes place between the two copies of $\IH$ while leaving the quaternionic structure within each copy of $\IH$ undeformed. Since the second column of the matrix $u$ in \eqref{utheta} is the image through $j$ of the first one, we may think of $u$ as made of two deformed quaternions.

Following a general strategy \cite{wang}, we now define $\A(M_\theta(2,\IH))$ to be the universal bialgebra for which $\A(\IH_\theta^2)$ is a comodule $*$-algebra. More precisely, we define a {\it transformation bialgebra} of $\A(\IH_\theta^2)$ to be a bialgebra $\B$ such that there is a $*$-algebra map
\begin{align*}
\Delta_L:\A(\IC^4_\theta) \rightarrow \B \ot \A(\IC^4_\theta),
\end{align*}
which satisfies
\beq
\label{condition:j} (\id \ot j) \circ\Delta_L=\Delta_L \circ j .
\eeq
We then set $\A(M_\theta(2,\IH))$ to be the {\it universal transformation bialgebra} in the following sense: for any transformation bialgebra $\B$ there exists a morphism of transformation bialgebras ({i.e.} commuting with the coactions) from $\A(M_\theta(2,\IH))$ onto $\B$.

The requirement that $\A(\IH_\theta^2)$ be a $\A(M_\theta(2,\IH))$-comodule algebra allows us to derive the commutation relations of the latter.
A coaction $\Delta_L$ is given by matrix multiplication,
\beq\label{aactc4}
\Delta_L:  (z_1,- \dueb, z_3,- \qub)^t \mapsto A_\theta \pot
(z_1, - \dueb, z_3, -\qub)^t ,
\eeq
for a generic $4\times4$ matrix $A_\theta=(A_{ij})$.
{}Asking for  \eqref{condition:j} we have
$$
(A_{jk})^* = (-1)^{j+k } A_{j'  k'},
$$
with $j ' = j +(-1)^{j+1}$ and the same for $k'$; this means that $A_\theta$ has  the form
\beq\label{Atheta} A_\theta
=
\begin{pmatrix}
a_{ij} & b_{ij}
\\
c_{ij} & d_{ij}
\end{pmatrix}
=
\begin{pmatrix}
a_1 & a_2 & b_1 & b_2
\\
-\ba_2 & \ba_1 & -\bb_2 &  \bb_1
\\
c_1 & c_2 & d_1 & d_2
\\
- \bc_2 &  \bc_1 & - \bd_2 & \bd_1
\end{pmatrix} \;.
\eeq
We use `quaternion notations' for the above matrix and write
\beq\label{Atheta2}
A_\theta
= \begin{pmatrix} a & b \\ c & d \end{pmatrix} , \qquad \mathrm{with} \quad a=(a_{ij})=\begin{pmatrix} a_1 & a_2 \\ -\ba_2 & \ba_1  \end{pmatrix},
\eeq
and similarly for the remaining parts.

The defining matrix in \eqref{Atheta} has a `classical form'. One readily finds that with respect to the torus action \eqref{act;co} its entries $A_{ij}$ are of multidegree $\Lambda_i \oplus (- \Lambda_j)$ in $\IZ^4$ with
$$
\Lambda = (\Lambda_i) = \big( (1,0), (-1,0), (0,1), (0,-1) \big).
$$
The general strategy exemplified by the deformed product
\eqref{eq:star-product} would then give the deformed product and  in turn, the commutation relations defining the deformed algebra $\A(M_\theta(2,\mathbb{H}))$. We shall get them directly from the coaction on the algebra $\A(\IC^4_\theta)$.

The transformations induced on the generators of
$\A(\IC^4_\theta)$ reads
\beq
\label{transf}
\begin{aligned}
\wuno := \Delta_L(\uno)= a_1 \otimes \uno - a_2 \otimes \dueb + b_1 \otimes \tre - b_2 \otimes \qub
\\
\wdue  := \Delta_L(\due)=a_1 \otimes \due +  a_2\otimes \unob + b_1\otimes \qu + b_2\otimes \treb
\\
\wtre :=  \Delta_L(\tre)=c_1 \otimes\uno - c_2 \otimes\dueb + d_1\otimes \tre - d_2\otimes \qub
\\
\wqu  := \Delta_L(\qu)=c_1 \otimes\due + c_2 \otimes\unob +d_1 \otimes\qu +d_2 \otimes\treb
\end{aligned}
\eeq
with $\Delta_L (z_j^*) =  (\Delta_L(z_j))^*$. The
condition for $\Delta_L$ to be an algebra map determines  the
commutation relations among the generators of
$\A(M_\theta(2,\mathbb{H}))$:
the algebra generated by the
$a_{ij}$ is commutative, as well as the algebras generated by the
$b_{ij}$, $c_{ij}$ and the $d_{ij}$. However, the whole algebra is
not commutative and there are relations. A straightforward computation allows one to concisely write them as
\beq\label{algAtheta}
A_{ij} A_{kl} = \eta_{ki} \eta_{jl} A_{kl} A_{ij}
\eeq
with $\eta=(\eta_{ki})$ the deformation matrix in \eqref{eta}.
Indeed, imposing that \eqref{aactc4} defines a $*$-algebra map on the generators of $\A(\IC^4_\theta)$, and using the relations \eqref{commutus}, we have $\sum_{kl} (A_{ik}A_{jl}- \eta_{ji}\eta_{kl}A_{jl}A_{ik}) \ot u_{ka}u_{lb}=0$.
Since for $a\leq b$ the elements $u_{ka}u_{lb}$ could be taken to be all independent, relations $A_{ik}A_{jl}- \eta_{ji}\eta_{kl}A_{jl}A_{ik}=0$ hold, for all values of $a,b$.

An explicit expression of the above commutation relations is in
App.~\ref{Ap-acr}.
It is not difficult to see that $\A(M_\theta(2,\IH))$ is indeed the universal transformation bialgebra, since the commutation relations \eqref{algAtheta} and the quaternionic structure of $A_\theta$ in \eqref{Atheta} are derived from the minimal requirement of $\Delta_L$ to be a $*$-algebra map such that \eqref{condition:j} holds.

\bigskip
In order to define the quantum group $\SL_\theta(2,\IH)$ we need a determinant. This is most naturally introduced via the coaction on forms.
There is a natural differential calculus
$\Omega(\IC^4_\theta)$ generated in degree 1 by elements
$\{\dd z_j, a=1,\dots,4\}$ and relations
\begin{align*}
& z_j \dd z_k - \lambda_{j k} \dd z_k z_j = 0, \qquad
z_j \dd z_k^* - \lambda_{k j} \dd z_k^* z_j = 0 , \qquad
z_j^* \dd z_k - \lambda_{k j} \dd z_k z_j^* = 0 , \\
& \dd z_j \dd z_k + \lambda_{j k} \dd z_k \dd z_j = 0, \qquad
\dd z_j \dd z_k^* + \lambda_{k j} \dd z_k^* \dd z_j = 0 .
\end{align*}
together with their conjugates.
The forms $\Omega(\IC^4_\theta)$ could be obtained from the general procedure mentioned at the end of Sect.~\ref{se:principal}.  The result is also isomorphic to the one obtained from the general construction \cite{connes} which uses the Dirac operator to implement the exterior derivative as a commutator.

The coaction $\Delta_L$ is extended to forms by requiring it to commute with
$\dd$.
Having the action \eqref{aactc4}, it is natural to
define a determinant element by setting
$$
\Delta_L(\dd z_1 \dd z_2^* \dd z_3 \dd z_4^*) =:
\det(A_\theta) \otimes \dd z_1 \dd  z_2^* \dd z_3 \dd  z_4^* .
$$
We find its explicit form by using the relations of $\Omega(\IC^4_\theta)$:
\begin{align} \label{det}
\det(A_\theta) &= a_1[\ba_1(d_1 \bd_1 + d_2 \bd_2)+\bb_2(\mu c_2 \bd_1 -d_2 \bc_1)-\bb_1(\bar\mu c_2 \bd_2 + d_1 \bc_1)] \nn
\\
&\quad - a_2[-\ba_2(d_1 \bd_1 +d_2\bd_2 ) +\bb_2(\bar\mu c_1 \bd_1 +d_2 \bc_2)+\bb_1 (-\mu c_1 \bd_2 +d_1 \bc_2)] \nn
\\
&\quad + b_1[-\ba_2(c_2\bd_1 - \bar\mu d_2 \bc_1)-\ba_1(c_1 \bd_1 + \mu d_2 \bc_2)+\bb_1(c_1 \bc_1 +c_2 \bc_2)]\nn
\\
&\quad - b_2[\ba_2(c_2 \bd_2 + \mu d_1 \bc_1)+\ba_1(c_1 \bd_2 - \bar\mu d_1 \bc_2)-\bb_2(c_1 \bc_1 + c_2 \bc_2)].
\end{align}
A more compact form for $\det (A_\theta)$ is found to be (see also App.~\ref{Ap-B})
\beq\label{det-cond}
\det (A_\theta)=\sum_{\sigma \in S_4} (-1)^{|\sigma|} \varepsilon^{\sigma}
A_{1,\sigma(1)}A_{2,\sigma(2)}A_{3,\sigma(3)}A_{4,\sigma(4)}, 
\eeq
with $\varepsilon^{\sigma}=\varepsilon^{\sigma(1)\sigma(2)\sigma(3)\sigma(4)}$. The tensor $\varepsilon^{ijkl}$ has components 
$$
\varepsilon^{1324}= \varepsilon^{cycl}=  \mu
 \quad ; \quad
\varepsilon^{1423}= \varepsilon^{cycl}= \bar \mu ,
$$
and equal to $1$ otherwise.
In the limit $\theta \rightarrow 0$, the element $\det(A_\theta)$ reduces to the determinant of the matrix $A_{\theta=0}$ as it should.
Additional results on the determinant are in the following lemmata.
\begin{lemma}\label{l1}
For each $i,l \in \{1,\dots,4 \}$,
define the corresponding algebraic complement:
$$
\hat A_{il}=
\sum_{\sigma \in S_3} (-1)^{|\sigma|}
\varepsilon^{\sigma_1\dots \sigma_{i-1} l \sigma_{i+1} \dots \sigma_4}
\eta^{\sigma_1 l} \eta^{\sigma_2 l} \cdots \eta^{\sigma_{i-1}l}
A_{1,\sigma_1}\dots A_{i-1,\sigma_{i-1}} A_{i+1,\sigma_{i+1}} \dots A_{4,\sigma(4)},
$$
where $\sigma=(\sigma_1, \dots \sigma_{i-1},\sigma_{i+1},\dots \sigma_4)=\sigma(1, \dots l-1,l+1,\dots 4)\in S_3$, the group of permutations of three objects.
Then,
$$A_{il} \hat A_{il}= \hat A_{il} A_{il} \; \qquad \mathrm{for ~any}\; i\;
\mathrm{and} \; l .
$$
\end{lemma}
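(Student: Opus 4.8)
The plan is to reduce the claimed commutation $A_{il}\hat{A}_{il}=\hat{A}_{il}A_{il}$ to the generating relations \eqref{algAtheta}, by showing that every monomial appearing in $\hat A_{il}$ commutes with $A_{il}$. Recall that $\hat A_{il}$ is, up to scalar coefficients, a sum of products $A_{1,\sigma_1}\cdots A_{i-1,\sigma_{i-1}}A_{i+1,\sigma_{i+1}}\cdots A_{4,\sigma_4}$ where $(\sigma_1,\dots,\widehat{\sigma_i},\dots,\sigma_4)$ runs over the bijections from $\{1,\dots,4\}\setminus\{i\}$ onto $\{1,\dots,4\}\setminus\{l\}$. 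For such a monomial, repeated use of \eqref{algAtheta} gives
\[
A_{il}\,A_{1,\sigma_1}\cdots A_{i-1,\sigma_{i-1}}A_{i+1,\sigma_{i+1}}\cdots A_{4,\sigma_4}
= \Big(\prod_{k\neq i}\eta_{ik}\,\eta_{l\sigma_k}\Big)\,A_{1,\sigma_1}\cdots A_{4,\sigma_4}\,A_{il}.
\]
So the first step is to verify this conjugation formula (it follows by moving $A_{il}$ past each factor in turn, picking up $\eta_{ki}\eta_{l\sigma_k}$ each time; note one must check that the intermediate reorderings are consistent, but since the $A_{1,*},\dots,A_{4,*}$ blocks are already in a fixed row order and \eqref{algAtheta} is used only to pass $A_{il}$ through, no ambiguity arises). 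The key point is then that the scalar prefactor is independent of $\sigma$: the product $\prod_{k\neq i}\eta_{ik}$ obviously does not depend on $\sigma$, and $\prod_{k\neq i}\eta_{l\sigma_k}=\prod_{m\neq l}\eta_{lm}$ because $\{\sigma_k : k\neq i\}=\{1,\dots,4\}\setminus\{l\}$ as a set, and each factor $\eta_{lm}$ enters exactly once regardless of the ordering. Hence every monomial in $\hat A_{il}$ is conjugated by $A_{il}$ by the same scalar $c_{il}:=\prod_{k\neq i}\eta_{ik}\cdot\prod_{m\neq l}\eta_{lm}$, so $A_{il}\hat A_{il}=c_{il}\,\hat A_{il}A_{il}$.

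It remains to show $c_{il}=1$. Using the explicit $\eta$ in \eqref{eta}, one has $\prod_{k=1}^{4}\eta_{ik}=\eta_{i1}\eta_{i2}\eta_{i3}\eta_{i4}$; for $i=1$ this is $1\cdot 1\cdot\bar\mu\cdot\mu=1$, for $i=2$ it is $1\cdot1\cdot\mu\cdot\bar\mu=1$, for $i=3$ it is $\mu\cdot\bar\mu\cdot1\cdot1=1$, and similarly for $i=4$; moreover $\eta_{ii}=1$ for all $i$. Therefore $\prod_{k\neq i}\eta_{ik}=1$ for every $i$, and likewise $\prod_{m\neq l}\eta_{lm}=1$ for every $l$ (by symmetry of the same computation, now reading rows indexed by $l$). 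Thus $c_{il}=1$ and the commutation $A_{il}\hat A_{il}=\hat A_{il}A_{il}$ follows.

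The only mild subtlety—and the step I would present most carefully—is the bookkeeping in the conjugation formula: one must be sure that moving $A_{il}$ through the product $A_{1,\sigma_1}\cdots A_{4,\sigma_4}$ really produces exactly $\prod_{k\neq i}\eta_{ki}\eta_{l\sigma_k}$ and not some $\sigma$-dependent scrambling. This is clean here because $A_{il}$ is passed through factors whose row indices $1,\dots,\widehat{i},\dots,4$ are all distinct from $i$, so each application of \eqref{algAtheta} contributes the single scalar $\eta_{ki}\eta_{l\sigma_k}$ (here $k$ is the row index of the factor being crossed and $\sigma_k$ its column index) and no further relations are invoked. Everything else is the elementary observation that a product over a set is order-independent, plus the one-line evaluation of the $\eta$-products above. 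I would also remark, as in the commutative limit check for \eqref{det}, that for $\theta\to 0$ all $\eta_{ij}=1$ and the statement degenerates to the classical fact that a determinant commutes with the algebraic complements appearing in its Laplace expansion—consistent with the present noncommutative refinement.
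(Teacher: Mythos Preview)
Your argument is correct and is essentially the same as the paper's: move $A_{il}$ through each factor of the monomial using \eqref{algAtheta}, observe that the accumulated scalar splits as $\prod_{k\neq i}\eta_{ki}\cdot\prod_{m\neq l}\eta_{lm}$ (independent of $\sigma$ because $\{\sigma_k:k\neq i\}=\{1,\dots,4\}\setminus\{l\}$), and check from the explicit form of $\eta$ that each of these row/column products equals $1$. One small slip: in your displayed formula you write $\eta_{ik}$ while your parenthetical correctly has $\eta_{ki}$ (the matrix $\eta$ is not symmetric, $\eta_{ij}=\bar\eta_{ji}$), but since the product over any full off-diagonal row or column of $\eta$ is $1$ either way, this does not affect the conclusion.
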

\begin{proof}
We use the shorthand $c^\sigma_l=
\varepsilon^{\sigma_1\dots \sigma_{i-1} l \sigma_{i+1} \dots \sigma_4}
\eta^{\sigma_1 l} \eta^{\sigma_2 l} \cdots \eta^{\sigma_{i-1}l} $.
Then, the commutation relations \eqref{algAtheta} yield,
\begin{align*}
A_{il} \hat A_{il} &= \sum_{\sigma \in S_3(\hat l)} c^\sigma_l A_{il}A_{1\sigma_1}\dots A_{i-1,\sigma_{i-1}} A_{i+1,\sigma_{i+1}} \dots A_{4,\sigma(4)} =\\
&=
\sum_{\sigma \in S_3(\hat l)} c^\sigma_l
\big( \eta^{1 i}\cdots \eta^{i-1,i} \eta^{i+1,i} \cdots \eta^{4i} \big)
\big( \eta^{l, \sigma_1 }\cdots \eta^{l \sigma_{i-1}} \eta^{l, \sigma_{i+1}} \cdots \eta^{l4} \big)
A_{1\sigma_1}\dots A_{i-1,\sigma_{i-1}} \cdot
\\
& \qquad \qquad  \qquad \qquad \qquad  \qquad \cdot A_{i+1,\sigma_{i+1}} \dots A_{4,\sigma(4)}A_{il} \\
&=  \hat A_{il} A_{il} \; .
\end{align*}
Here we used the fact that $\eta^{1 i}\cdots \eta^{i-1,i} \eta^{i+1,i} \cdots \eta^{4i}$ is the product of the elements in the $i$-th column of $\eta$ excluded the element $\eta^{ii}=1$ and the result is $1$ as one deduces from the form of the matrix $\eta$ in \eqref{eta}. Similarly for the other coefficient given by the product of the elements of the $l$-th row.
\end{proof}
\begin{lemma}\label{l2}
The determinant $\det(A_\theta)$ is computed via a Laplace expansion:
\begin{enumerate}
\item by rows; for each $i \in \{1,\dots,4 \}$ fixed:
$
\qquad \det (A_\theta)=\sum_{l} (-1)^{i+l} A_{il} \hat A_{il} ;
$
\item by columns; for each $i \in \{1,\dots,4 \}$ fixed:
$
\qquad \det (A_\theta)=\sum_{l} (-1)^{i+l} A_{li} \hat A_{li} .
$
\end{enumerate}
\end{lemma}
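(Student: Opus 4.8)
The two expansions are dual under the symmetry $A_\theta \mapsto A_\theta^{t}$ and, at the level of the defining relations \eqref{algAtheta}, under $\eta \mapsto \eta^{t}$; but $\eta$ in \eqref{eta} is symmetric, so it suffices to prove one of the two, say the row expansion, and the column expansion follows by the same argument applied to the transpose. I would prove the row expansion by starting from the closed form \eqref{det-cond} and splitting the sum over $\sigma \in S_4$ according to the value $l := \sigma(i)$ taken at the fixed index $i$. Writing each permutation as $\sigma = \tau \cdot (i \mapsto l)$ where $\tau$ ranges over the bijections $\{1,\dots,i-1,i+1,\dots,4\} \to \{1,\dots,l-1,l+1,\dots,4\}$ — i.e.\ over $S_3$ after relabelling — one gets
\[
\det(A_\theta) = \sum_{l=1}^{4} \sum_{\tau} (-1)^{|\sigma|}\, \varepsilon^{\sigma}\,
A_{1,\sigma(1)} \cdots A_{i-1,\sigma(i-1)}\, A_{i,l}\, A_{i+1,\sigma(i+1)} \cdots A_{4,\sigma(4)}.
\]
The sign bookkeeping is routine: $(-1)^{|\sigma|} = (-1)^{i+l} (-1)^{|\tau|}$ after one accounts for the transposition moving $l$ past the intervening entries, exactly as in the commutative Laplace expansion.

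The genuinely noncommutative point is twofold. First, I must commute the factor $A_{i,l}$ in each term to the \emph{right} past $A_{i+1,\sigma(i+1)},\dots,A_{4,\sigma(4)}$, so that the remaining product has the shape $A_{1,\sigma_1}\cdots A_{i-1,\sigma_{i-1}} A_{i+1,\sigma_{i+1}}\cdots A_{4,\sigma_4}$ appearing in the definition of $\hat A_{il}$, and then pull $A_{il}$ back out on the left as in the statement $\sum_l (-1)^{i+l} A_{il}\hat A_{il}$. Using \eqref{algAtheta}, commuting $A_{il}$ rightward past $A_{k,\sigma_k}$ for $k>i$ produces a scalar $\eta_{ki}\eta_{l\sigma_k}$; the total scalar picked up is $\big(\prod_{k>i}\eta_{ki}\big)\big(\prod_{k>i}\eta_{l\sigma_k}\big)$. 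This is precisely the partial column/row product that, together with the $\eta$-factors hidden in $\varepsilon^\sigma$ versus $\varepsilon^{\sigma_1\dots l\dots\sigma_4}$, must reassemble into the coefficient $c^\sigma_l = \varepsilon^{\sigma_1\dots\sigma_{i-1}l\sigma_{i+1}\dots\sigma_4}\,\eta^{\sigma_1 l}\cdots\eta^{\sigma_{i-1}l}$ defining $\hat A_{il}$ in Lemma~\ref{l1}. So the second, and main, obstacle is the \emph{$\eta$-bookkeeping}: showing that the scalar prefactors produced by \eqref{algAtheta} when reordering, combined with the difference between the full $\varepsilon$-tensor and the reduced one, exactly match $(-1)^{i+l} c^\sigma_l$. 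Here I would lean on the structural facts about $\eta$ already used in the proof of Lemma~\ref{l1} — each column (resp.\ row) of $\eta$, with the diagonal entry $\eta^{ii}=1$ removed, multiplies to $1$, and $\eta^{ij}\eta^{ji}=1$ — to absorb or cancel the spurious factors. Concretely, I expect the product $\prod_{k\neq i}\eta^{k i}$ of a whole column to collapse to $1$, leaving only the "one-sided" products $\eta^{\sigma_k l}$ that the definition of $\hat A_{il}$ prescribes.

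Assembling the pieces: after the reordering, term $(\sigma,l)$ of \eqref{det-cond} equals $(-1)^{i+l} c^\sigma_l\, A_{il}\, A_{1,\sigma_1}\cdots A_{i-1,\sigma_{i-1}} A_{i+1,\sigma_{i+1}}\cdots A_{4,\sigma_4}$ (with the scalars verified to match by the $\eta$-bookkeeping above), and summing over $\tau \leftrightarrow \sigma \in S_3(\hat l)$ for fixed $l$ gives exactly $(-1)^{i+l} A_{il}\hat A_{il}$ by the definition of $\hat A_{il}$; summing over $l$ yields part (1). Finally, observe that since $A_{il}$ commutes with $\hat A_{il}$ by Lemma~\ref{l1}, one may equally write $\det(A_\theta) = \sum_l (-1)^{i+l}\hat A_{il} A_{il}$, which is convenient for the column case: applying the whole argument to $A_\theta^{t}$ — legitimate because $\eta$ is symmetric, so $A_\theta^{t}$ satisfies relations of the same form \eqref{algAtheta} — and noting $\det(A_\theta^t)=\det(A_\theta)$ (visible from \eqref{det-cond} by symmetrising the sum over $S_4$, or checkable directly), delivers part (2). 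The only care needed in this last transfer is that the reduced $\varepsilon$-tensor and the $\eta^{\sigma_k l}$ weights transform correctly under transposition of indices, which again reduces to the symmetry of $\varepsilon^{ijkl}$ under the relevant index swaps and of $\eta$.
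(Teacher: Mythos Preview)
Your row-expansion argument is correct and is exactly the ``lengthy but straightforward computation from \eqref{algAtheta}'' that the paper invokes without giving details: split the sum \eqref{det-cond} according to $l=\sigma(i)$, reorder with \eqref{algAtheta}, and check that the accumulated $\eta$-phases reproduce the coefficient $c^\sigma_l$ defining $\hat A_{il}$. (A minor simplification: if you commute $A_{il}$ \emph{leftward} past $A_{1,\sigma_1},\dots,A_{i-1,\sigma_{i-1}}$ instead of rightward, you land directly on $A_{il}\hat A_{il}$ and need not invoke Lemma~\ref{l1}; the partial product $\prod_{k<i}\eta_{ik}$ collapses to $1$ just as the full column product does.)

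There is one factual slip in your transposition shortcut for part~(2): the matrix $\eta$ in \eqref{eta} is \emph{not} symmetric --- e.g.\ $\eta_{13}=\bar\mu$ while $\eta_{31}=\mu$ --- rather $\eta^{t}=\bar\eta$. Hence $A_\theta^{t}$ satisfies \eqref{algAtheta} with $\eta$ replaced by $\bar\eta$, not with $\eta$ itself, and the assertion ``$\det(A_\theta^t)=\det(A_\theta)$, visible by symmetrising over $S_4$'' is not immediate either (the alternative expansion with row indices permuted, recorded later as \eqref{det2}, carries $\bar\varepsilon$ rather than $\varepsilon$). The repair is painless: run the identical direct argument for the column case, splitting \eqref{det-cond} according to the row $l=\sigma^{-1}(i)$ for the fixed column $i$; after extracting $A_{li}$ the remaining ordered product already has the shape of $\hat A_{li}$, and the $\eta$-bookkeeping goes through with the same structural identities.
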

\begin{proof}
These follow from \eqref{algAtheta} after a lengthy but straightforward computation.
\end{proof}

The particular form of the deformation matrix $\eta_{ij}$ defining the relations in $\A(\S^7)$ implies that $\det(A_\theta)$ is (not surprisingly) a central element in the algebra $\A(M_\theta(2,\mathbb{H}))$ generated by the entries of $A_\theta$. Hence we can take the quotient of this algebra by the two-sided ideal generated by $\det(A_\theta) -1$; we will denote this quotient by $\A(\SL_\theta(2,\IH))$. The image of the elements $A_{ij}$ in the quotient algebra will again be denoted by $A_{ij}$.

\bigskip

In order to have a quantum group we need more structure. On the algebra $\A(\SL_\theta(2,\IH))$ we define a coproduct by
$$
\Delta(A_{ij}):=\sum\nolimits_k A_{ik} \otimes A_{kj} ,
$$
a counit by
$$
\epsilon(A_{ij}):=\delta_{ij},
$$
whereas the antipode $S$ is defined by
$$
S(A_{ij}):= (-1)^{i+j} \hat A_{ji} \, .
$$
Here $\hat A_{li}$ are the algebraic complements introduced in Lemma~\ref{l1}.
Indeed, from Lemma~\ref{l2},
$ \sum_l A_{il} S(A_{li})= \sum_l (-1)^{i+l} A_{il} \hat A_{il} =\det(A_\theta)=1$,
and similarly, using also Lemma \ref{l1},
$ \sum_l  S(A_{il}) A_{li}= \sum_l (-1)^{i+l} \hat A_{li}  A_{li} =\det(A_\theta) =1$.
Moreover, if $i \neq j$, $\sum_l A_{il} S(A_{lj})= 0$ as one shows by explicit computation.

The definitions above are collected in the following proposition.
\begin{prop}\label{prop:sl}
The datum $(\A(\SL_\theta(2,\IH)),\Delta,\epsilon,S)$ constitutes  a Hopf algebra.
\end{prop}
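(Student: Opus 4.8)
The plan is to verify the Hopf algebra axioms one structure map at a time: first that $(\A(\SL_\theta(2,\IH)),\Delta,\epsilon)$ is a bialgebra, then that $S$ is an antipode. The coalgebra axioms (coassociativity and counitality) are immediate on the generators from $\Delta(A_{ij})=\sum_k A_{ik}\otimes A_{kj}$ and $\epsilon(A_{ij})=\delta_{ij}$, and since both sides of each identity are algebra maps they then agree on all of $\A(\SL_\theta(2,\IH))$. The substance of the proof is therefore that $\Delta$, $\epsilon$ and $S$ are well defined on the quotient $\A(\SL_\theta(2,\IH))=\A(M_\theta(2,\IH))/(\det(A_\theta)-1)$, the quotient itself making sense because $\det(A_\theta)$ is central.

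For $\Delta$ there are two points. First, $\Delta$ respects the relations \eqref{algAtheta}: starting from $\Delta(A_{ij})\Delta(A_{kl})=\sum_{m,n}A_{im}A_{kn}\otimes A_{mj}A_{nl}$, commute the first leg by \eqref{algAtheta} (producing the factor $\eta_{ki}\eta_{mn}$) and the second leg (producing $\eta_{nm}\eta_{jl}$); since the explicit form \eqref{eta} of $\eta$ gives $\eta_{mn}\eta_{nm}=1$ for all $m,n$, what survives is exactly $\eta_{ki}\eta_{jl}$, so $\Delta(A_{ij})\Delta(A_{kl})=\eta_{ki}\eta_{jl}\,\Delta(A_{kl})\Delta(A_{ij})$, as required. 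Second, $\Delta$ must pass to the quotient, for which it suffices that $\Delta(\det(A_\theta))=\det(A_\theta)\otimes\det(A_\theta)$, so that $\Delta(\det(A_\theta)-1)$ maps to $0$ in $\A(\SL_\theta(2,\IH))^{\otimes 2}$. The cleanest argument uses that $\A(\IC^4_\theta)$ is a comodule algebra for $\Delta_L$, i.e. $(\Delta\otimes\id)\circ\Delta_L=(\id\otimes\Delta_L)\circ\Delta_L$, evaluated on the top form $\dd z_1\dd z_2^*\dd z_3\dd z_4^*$ whose coaction is $\det(A_\theta)\otimes(\cdot)$ by definition; comparing the two sides yields $\Delta(\det(A_\theta))=\det(A_\theta)\otimes\det(A_\theta)$ at once. (Alternatively, expand $\Delta(\det(A_\theta))$ with the Laplace formula of Lemma~\ref{l2} and simplify using \eqref{algAtheta}.) As for $\epsilon$, it is manifestly an algebra map on the free algebra generated by the $A_{ij}$, and $\epsilon(\det(A_\theta))=1$ (the determinant of the identity matrix, read off from \eqref{det-cond}), so it descends. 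This gives the bialgebra structure.

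For the antipode, $S(A_{ij})=(-1)^{i+j}\hat A_{ji}$ defines $S$ on generators; to extend it to a well defined algebra anti-homomorphism one checks that the cofactors $\hat A_{ji}$, taken in reversed order, satisfy the relations opposite to \eqref{algAtheta} --- a computation in the spirit of Lemma~\ref{l1}, using \eqref{algAtheta} and the row/column normalisations of $\eta$ --- and that $S(\det(A_\theta))=1$. Granting this, the antipode identities $m\circ(S\otimes\id)\circ\Delta=\epsilon(\cdot)1=m\circ(\id\otimes S)\circ\Delta$ reduce on the generators to the identities already recorded just before the statement: $\sum_l A_{il}S(A_{lj})=\sum_l(-1)^{l+j}A_{il}\hat A_{jl}$, which equals $\det(A_\theta)=1$ when $i=j$ by the Laplace expansion by rows (Lemma~\ref{l2}(1)) and vanishes when $i\neq j$ because it is the row-expansion of a determinant with two equal rows; and symmetrically $\sum_l S(A_{il})A_{lj}=\sum_l(-1)^{i+l}\hat A_{li}A_{lj}$, which for $i=j$ equals $\det(A_\theta)=1$ using Lemma~\ref{l2}(2) together with Lemma~\ref{l1} (to put $\hat A_{li}$ and $A_{li}$ in the required order) and for $i\neq j$ vanishes by the column version of the repeated-line argument. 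Since these hold on algebra generators they hold throughout $\A(\SL_\theta(2,\IH))$, completing the proof.

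The step I expect to be the main obstacle is the well-definedness of $S$ --- establishing that the cofactors obey the opposite commutation relations and that $S(\det(A_\theta))=1$ --- together with the $i\neq j$ vanishing of the cofactor expansions; these are the only places where one must genuinely drive the deformed relations \eqref{algAtheta} through a cofactor computation, everything else being either formal (the coalgebra axioms) or a one-line manipulation (the cancellation $\eta_{mn}\eta_{nm}=1$ for $\Delta$, the comodule argument for $\det$, the evaluation for $\epsilon$).
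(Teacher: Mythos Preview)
Your proof is correct and follows the same route as the paper: the paper's argument is essentially the paragraph preceding the proposition, which verifies the antipode identities on generators via Lemmas~\ref{l1} and~\ref{l2} (with the off-diagonal vanishing left to ``explicit computation''), and treats the bialgebra structure and well-definedness of $\Delta,\epsilon,S$ as implicit --- either by appeal to the general Rieffel-deformation framework stated at the beginning of Sect.~\ref{se:SL2H}, or as routine checks. You have simply spelled out those routine checks (the $\eta_{mn}\eta_{nm}=1$ cancellation for $\Delta$, the comodule argument for $\Delta(\det)=\det\otimes\det$, etc.) more carefully than the paper does.
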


\bigskip
The coaction $\Delta_L$ in \eqref{aactc4} passes to a coaction of $\A(\SL_\theta(2,\IH))$ on $\A(\IH_\theta^2)$ and it is still a $*$-algebra map.
However, the spherical relation $\sum_j z_j^* z_j = 1$ is no longer invariant under $\Delta_L$. Thus, the algebra $\A(S^7_\theta)$ is not an $\A(\SL_\theta(2,\IH))$-comodule algebra but only a $\A(\SL_\theta(2,\IH))$-comodule.   We shall elaborate more on this in Sect.~\ref{se:inflated} below.

\subsection{The quantum group $\Sp_\theta(2)$}\label{se:sp}

Motivated by the classical picture we next introduce the symplectic group $\A(\Sp_\theta(2))$. Recall that a two-sided $*$-ideal $I$ in a Hopf algebra $(\A,\Delta,\epsilon,S)$ is a Hopf ideal if
\beq\label{condhi}
\Delta(I) \subseteq I \otimes \A + \A \otimes I, \qquad \epsilon(I)=0, \qquad S(I) \subseteq I.
\eeq
Then the quotient $\A / I$ is a Hopf algebra with induced structures
$((\pi \ot \pi)\circ \Delta, \epsilon, \pi \circ S)$, where $\pi:\A \to \A/ I$ is the natural projection.
\begin{prop} Let $I$ denote the two-sided $*$-ideal in $\A(\SL_\theta(2,\IH))$
generated by the elements $\sum_k (A_{ki})^* A_{kj}-\delta_{ij}$ for $i,j=1,\ldots,4$. Then $I \subset \A(\SL_\theta(2,\IH))$ is a Hopf ideal.
\end{prop}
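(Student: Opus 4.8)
The plan is to verify the three defining conditions of \eqref{condhi} for $I$ directly on its algebra generators $G_{ij}:=\sum_k(A_{ki})^*A_{kj}-\delta_{ij}$; this suffices, since $\Delta$ and $\epsilon$ are algebra homomorphisms, $S$ is an algebra anti-homomorphism, and $I\otimes\A+\A\otimes I$ is a two-sided ideal of $\A\otimes\A$, so that a condition verified on a generating set of $I$ automatically extends to all of $I$. I would first record that $I$ is genuinely a $*$-ideal: using $A_{ij}^*=(-1)^{i+j}A_{i'j'}$ from \eqref{Atheta} one gets $(G_{ij})^*=\sum_k(A_{kj})^*A_{ki}-\delta_{ij}=G_{ji}$.

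The counit condition is immediate from $\epsilon(A_{kl})=\delta_{kl}$, giving $\epsilon(G_{ij})=\sum_k\delta_{ki}\delta_{kj}-\delta_{ij}=0$. For the coproduct I would set $P_{ij}:=\sum_k(A_{ki})^*A_{kj}=(A_\theta^*A_\theta)_{ij}$, so $P_{ij}=G_{ij}+\delta_{ij}$, and use that $\Delta$ is a $*$-homomorphism ($\Delta(A_{kj})=\sum_mA_{km}\otimes A_{mj}$, $\Delta((A_{ki})^*)=\sum_n(A_{kn})^*\otimes(A_{ni})^*$) to compute $\Delta(P_{ij})=\sum_{m,n}P_{nm}\otimes(A_{ni})^*A_{mj}$. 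Substituting $P_{nm}=G_{nm}+\delta_{nm}$ then yields
\[
\Delta(G_{ij})=1\otimes G_{ij}+\sum_{m,n}G_{nm}\otimes(A_{ni})^*A_{mj}\ \in\ \A\otimes I+I\otimes\A ,
\]
as required.

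The antipode condition is the real work, and the main obstacle. The Hopf axioms of Proposition~\ref{prop:sl} say exactly that the matrix $S(A_\theta):=\bigl((-1)^{i+j}\hat A_{ji}\bigr)_{ij}$ is a two-sided inverse of $A_\theta$ in $\Mat_4(\A(\SL_\theta(2,\IH)))$, i.e. $A_\theta\,S(A_\theta)=S(A_\theta)\,A_\theta=\II$, while by definition of $I$ one has $A_\theta^*A_\theta=\II+G$ with $G=(G_{ij})$ having all entries in $I$. From $(A_\theta^*-S(A_\theta))A_\theta=G$, right-multiplying by $S(A_\theta)$ gives $A_\theta^*-S(A_\theta)=G\,S(A_\theta)\in\Mat_4(I)$, that is $S(A_{ij})\equiv(A_{ji})^*\pmod{I}$, equivalently $\hat A_{ij}\equiv A_{i'j'}\pmod{I}$; and then $A_\theta A_\theta^*=A_\theta S(A_\theta)+A_\theta G\,S(A_\theta)\equiv\II\pmod{I}$, i.e. $\sum_kA_{ik}(A_{jk})^*\equiv\delta_{ij}\pmod{I}$. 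I would then apply $S$ to $G_{ij}=P_{ij}-\delta_{ij}$: with $S((A_{ki})^*)=(-1)^{k+i}S(A_{k'i'})=\hat A_{i'k'}$ (the sign $(-1)^{(k+k')+(i+i')}$ equals $1$ since $k+k'$ and $i+i'$ are odd), anti-multiplicativity of $S$ gives $S(G_{ij})=\sum_k(-1)^{k+j}\hat A_{jk}\hat A_{i'k'}-\delta_{ij}$. Reducing modulo $I$ by $\hat A_{jk}\equiv A_{j'k'}$ and $\hat A_{i'k'}\equiv A_{ik}$, reindexing $k\mapsto k'$ and converting $A$'s back into $A^*$'s via $A_{ij}^*=(-1)^{i+j}A_{i'j'}$, the sum should collapse to $(-1)^{i+j}\sum_mA_{j'm}(A_{i'm})^*=(-1)^{i+j}(A_\theta A_\theta^*)_{j'i'}\equiv(-1)^{i+j}\delta_{j'i'}=\delta_{ij}\pmod{I}$, whence $S(G_{ij})\equiv0\pmod{I}$. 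Together with the previous two points this shows $I$ satisfies \eqref{condhi}.

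I expect the antipode step to be where the effort lies: it is the only place where one must use both the quaternionic $*$-structure of $A_\theta$ and the identification of the algebraic-complement antipode with the matrix inverse, and where the bookkeeping of the signs $(-1)^{i+j}$ and the primed indices $i',j',k'$ is genuinely error-prone. A possible shortcut is to observe that each $G_{ij}$ is homogeneous of multidegree $0\oplus(\Lambda_i-\Lambda_j)\in\IZ^4$ for the $\IT^4$-action, so that $I$ is the $\theta$-deformation of the classical ideal cutting out $\Sp(2)\subset\SL(2,\IH)$ and the claim follows from its classical counterpart; but making the compatibility of the cocycle deformation with $\Delta$, $\epsilon$ and $S$ precise is of comparable length.
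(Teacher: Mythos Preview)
Your argument is correct. For the counit and coproduct conditions you do essentially what the paper does (a direct check on the generators), and your direct computation for the antipode condition goes through as written: the key identities $S(A_{ij})\equiv(A_{ji})^*$ and $A_\theta A_\theta^*\equiv\II$ modulo $I$, together with the index bookkeeping $(-1)^{(k+k')+(i+i')}=1$ and $(-1)^{i'+j}=-(-1)^{i+j}$, do yield $S(G_{ij})\equiv(-1)^{i+j}(A_\theta A_\theta^*)_{j'i'}-\delta_{ij}\equiv0$.

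Where you differ from the paper is precisely in the antipode step, and in fact the paper takes the ``shortcut'' you mention at the end. The paper observes that the generators $G_{ij}$ are homogeneous for the $\IT^4$-action (of multidegree $0\oplus(\Lambda_i-\Lambda_j)$, as you say), so that the deformed ideal $I$ coincides \emph{as a vector space} with its classical counterpart $I^{(0)}$: multiplying a homogeneous generator by a homogeneous element only changes the product by a phase. Since the antipode $S$ is the undeformed one on the underlying vector space, $S(I)=S(I^{(0)})\subseteq I^{(0)}=I$ follows immediately from the classical fact that $\Sp(2)\subset\SL(2,\IH)$ is a subgroup. This avoids all the sign and primed-index bookkeeping you carry out, at the cost of invoking the vector-space identification of the $\theta$-deformation and the classical result. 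Your route, by contrast, is entirely internal to the deformed Hopf algebra and does not appeal to the classical limit; it is longer but self-contained, and it makes transparent exactly how the quaternionic $*$-structure and the matrix-inverse description of $S$ conspire to give $S(I)\subseteq I$.
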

\begin{proof}  The first two conditions in \eqref{condhi} follow easily from the definition of $\Delta$ and $\epsilon$ for $\A(\SL_\theta(2,\IH))$ in Proposition~\ref{prop:sl}. For the third, we observe that if $J$ is an ideal in $\A(\SL_\theta(2,\IH))$ such that the classical counterpart  $J^\class$ is an ideal in $\A(\SL(2,\IH))$ which is generated by homogeneous elements, then $J = J^\class$ as vector spaces. Indeed, the deformed product of a generator with any homogeneous function merely results in multiplication by a complex phase.
In our case, the classical counterparts for the generators
$\sum_k (A_{ki})^* A_{kj} - \delta_{ij}$ are indeed homogeneous (if $i=j$, they are of bidegree $(0,0)$, otherwise of bidegree $\Lambda_i - \Lambda_j$) and the above applies. In particular, $S(I) = S(I^\class) \subseteq I^\class = I$.
\end{proof}

\begin{cor}
The quotient
$\A(\Sp_\theta(2)):=\A(\SL_\theta(2,\IH))/I$ is a Hopf algebra
with the induced Hopf algebra structure.
\end{cor}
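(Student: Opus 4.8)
The statement is an immediate consequence of the preceding Proposition together with the general fact, recalled just before it, that the quotient of a Hopf algebra by a Hopf ideal is again a Hopf algebra; for completeness the plan is to spell out the (routine) verification of this mechanism in the case at hand.

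By the preceding Proposition, $I$ is a Hopf ideal in $\A(\SL_\theta(2,\IH))$, so all three conditions in \eqref{condhi} hold; moreover $I$ is by construction a two-sided $*$-ideal, so $\A(\Sp_\theta(2)) := \A(\SL_\theta(2,\IH))/I$ is a $*$-algebra and the canonical projection $\pi$ is a surjective $*$-algebra homomorphism. Write $\bar\Delta, \bar\epsilon, \bar S$ for the candidate induced structures $(\pi\ot\pi)\circ\Delta$, $\epsilon$, $\pi\circ S$. First I would check that these descend. Since we work over the field $\IC$, the kernel of the surjection $\pi\ot\pi:\A(\SL_\theta(2,\IH))^{\ot 2}\to \A(\Sp_\theta(2))^{\ot 2}$ is exactly $I\ot\A(\SL_\theta(2,\IH)) + \A(\SL_\theta(2,\IH))\ot I$; by the first condition in \eqref{condhi} this kernel contains $\Delta(I)$, so $(\pi\ot\pi)\circ\Delta$ annihilates $I$ and factors uniquely through $\pi$ as a map $\bar\Delta:\A(\Sp_\theta(2))\to \A(\Sp_\theta(2))^{\ot 2}$. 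The condition $\epsilon(I)=0$ yields a unique $\bar\epsilon:\A(\Sp_\theta(2))\to\IC$ with $\bar\epsilon\circ\pi=\epsilon$, and the condition $S(I)\subseteq I$ shows that $\pi\circ S$ annihilates $I$, giving a unique $\bar S:\A(\Sp_\theta(2))\to\A(\Sp_\theta(2))$ with $\bar S\circ\pi=\pi\circ S$.

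Then I would verify the Hopf algebra axioms, all of which descend from those on $\A(\SL_\theta(2,\IH))$ by surjectivity of $\pi$. For coassociativity one computes $(\bar\Delta\ot\id)\circ\bar\Delta\circ\pi = \pi^{\ot 3}\circ(\Delta\ot\id)\circ\Delta = \pi^{\ot 3}\circ(\id\ot\Delta)\circ\Delta = (\id\ot\bar\Delta)\circ\bar\Delta\circ\pi$, and since $\pi$ is onto this forces $(\bar\Delta\ot\id)\circ\bar\Delta = (\id\ot\bar\Delta)\circ\bar\Delta$. In the same way one checks the counit property for $\bar\epsilon$, the fact that $\bar\Delta$ and $\bar\epsilon$ are $*$-algebra maps and are mutually compatible so that $\A(\Sp_\theta(2))$ is a $*$-bialgebra, and finally the antipode identities $m\circ(\bar S\ot\id)\circ\bar\Delta = \iota\circ\bar\epsilon = m\circ(\id\ot\bar S)\circ\bar\Delta$, where $\iota:\IC\to\A(\Sp_\theta(2))$ is the unit: each such identity is obtained by precomposing the corresponding identity on $\A(\SL_\theta(2,\IH))$ with $\pi$ (and the appropriate tensor powers of $\pi$) and invoking surjectivity.

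There is no genuine obstacle. The only point that deserves a word of care is the identification of $\ker(\pi\ot\pi)$ used above: it relies on flatness of vector spaces over $\IC$ and is what guarantees that $\bar\Delta$ really lands in the algebraic tensor square $\A(\Sp_\theta(2))\ot\A(\Sp_\theta(2))$ rather than in a completion thereof, so that the induced datum is again a genuine Hopf algebra with the stated structure maps.
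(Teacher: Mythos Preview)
Your proposal is correct and follows exactly the route the paper takes: the Corollary is stated without proof immediately after the Proposition, since it is an instance of the general fact (recalled just before the Proposition) that the quotient of a Hopf algebra by a Hopf ideal inherits a Hopf algebra structure via $((\pi\ot\pi)\circ\Delta,\epsilon,\pi\circ S)$. You have simply spelled out the routine verification of this standard mechanism.
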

\noindent
We still use the symbols $(\Delta,\epsilon,S)$ for the induced structures. The `defining matrix' $A_\theta$ of $\A(\Sp_\theta(2))$ has the form \eqref{Atheta} (or \eqref{Atheta2}) with the additional condition that $A^*_\theta A_\theta=1$, coming from the very definition of $\A(\Sp_\theta(2))$. A little algebra shows also that  $A_\theta A^*_\theta=1$. These conditions are equivalent to the statement that
$S(A_\theta)= A_\theta^*$. In the quaternionic form, the conditions $A^*_\theta A_\theta = A_\theta A^*_\theta =1$ becomes 
$$
\begin{pmatrix}
a^* a + c^* c& a^*b+ c^*d   \\
b^* a + d^*c & b^*b + d^*d
\end{pmatrix}
=
\begin{pmatrix}
a a^*  + b b^* & ac^*+ bd^*   \\
ca^*  + db^* & cc^* + dd^*
\end{pmatrix}  =
\begin{pmatrix}
1 & 0 \\ 0 & 1
\end{pmatrix} .
$$

\subsection{The quantum homogeneous spaces $\A(S^7_\theta)$ and $\A(S^4_\theta)$}
\label{s7qhs}

Using the same notations as in \eqref{Atheta},
let us consider the two sided ideal in $\A(\Sp_\theta (2))$ given by
$
I_\theta := \langle b_{ij}, c_{ij}, a_2, \ba_2, a_1 -1,  \ba_1 -1\rangle $.
This is a $*$-Hopf ideal, that is,
$$
\Delta(I_\theta) \subseteq \A(\Sp_\theta (2)) \ot I_\theta+
 I_\theta \ot  \A(\Sp_\theta (2)) \; , \qquad
\varepsilon (I_\theta)=0 \; , \qquad
 S(I_\theta) \subseteq I_\theta \; ,
 $$
 and we can take the quotient Hopf algebra $\A(\Sp_\theta(1)):=
\A(\Sp_\theta (2)) / I_\theta$ with corresponding
projection map $\pi_{I_\theta}$. By projecting with $\pi_{I_\theta}$, the algebra
reduces to the commutative one generated by the entries of the
diagonal matrix $\pi_{I_\theta}(A_\theta)
=\diag(\II_2, d_{ij})=\diag(\II_2, d)$, with
$d^* d=d d^* = \II_2$  or $d_1 \bd_1 + d_2 \bd_2 =1$;
hence, $\A(\Sp_\theta(1))= \A(\Sp(1))$.
There is a coaction
 \begin{eqnarray*}
\A(\Sp_\theta(2)) \rightarrow \A(\Sp_\theta(2)) \ot
\A(\Sp(1)),\qquad
\begin{pmatrix} a  & b \\
c  & d
\end{pmatrix}  \mapsto  \begin{pmatrix} a  & b
\\
c  & d
\end{pmatrix} \pot \begin{pmatrix} \II & 0
\\
0 & d
\end{pmatrix} \;.
\end{eqnarray*}
The corresponding algebra of coinvariants
$\A(\Sp_\theta(2))^{co(\A(\Sp(1))}$  is clearly generated by the first two columns
$(a,c)=\{a_{ij}, c_{ij}\}$ of $A_\theta$. An algebra isomorphism between the algebra of coinvariants and
$\A(S^7_\theta)$ is provided by the $*$-map sending these columns to the matrix $u$ in \eqref{utheta}. On the generators this is given by
\beq\label{corris}
a_1 \mapsto \uno \;, \quad a_2 \mapsto \due \;, \quad c_1 \mapsto \tre \;, \quad c_2 \mapsto \qu
\eeq
and the spherical relation corresponds to the condition
$(A_\theta^* A_\theta)_{11}= \sum (\ba_i a_i  + \bc_i c_i)= 1$.
Summarizing, we have that $\A(\Sp_\theta(2))^{co(\A(\Sp(1))} \simeq
\A(S^7_\theta)$. It follows from the general theory of noncommutative principal bundles over quantum homogeneous spaces \cite{BM93} that the inclusion $\A(S^7_\theta)\into\A(\Sp_\theta(2))$ is a noncommutative principal bundle with the classical group $\Sp(1)$ as structure group.

\bigskip

Next, we consider the  ideal in $\A(\Sp_\theta(2))$ given
by $J_\theta := \langle b_{ij}, c_{ij} \rangle$, --
again easily shown to be a Hopf ideal. Denote by
$\pi_{J_\theta}$ the projection map onto the quotient Hopf algebra
$\A(\Sp_\theta (2)) / J_\theta$ generated, as an algebra,
by the entries of
$\pi_{J_\theta}(A_\theta) = \diag(a_{ij}, d_{ij})= \diag(a, d)$. The conditions
$A_\theta^* A_\theta=A_\theta A_\theta^*=1$ give that both $\{a_{ij}\}$ and $\{d_{ij}\}$ generate a copy of the algebra $\A(\Sp(1))$. However, from the explicit relations in App.~\ref{Ap-acr} we see that in general
$a_{ij} d_{mn} \neq d_{mn} a_{ij}$ and the quotient algebra is not commutative.

The algebra of coinvariants  for the right coaction $(\id \ot \pi_{J_\theta})\circ \Delta$ of $\pi_{J_\theta}(\A(\Sp_\theta(2)))$ on
$\A(\Sp_\theta(2))$, is $\A(S^4_\theta)$.
Indeed, with the map $(\id
\ot \pi_{J_\theta})\circ \Delta  :  A_\theta \mapsto A_\theta \pot
\pi_{J_\theta}(A_\theta)$ given by
$$
\begin{pmatrix} a & b
\\
c & d
\end{pmatrix} \mapsto  \begin{pmatrix} a \pot a & b \pot d
\\
c\pot a & d \pot d
\end{pmatrix} \; ,
$$
one finds that the $*$-algebra of coinvariants is generated by the elements
\begin{gather*}
 (aa^*)_{11}= a_1 \ba_1+ a_2 \ba_2   \; ,    \qquad
 (ca^*)_{11}= c_1 \ba_1+ c_2 \ba_2   \; , \qquad
 (ca^*)_{12}= - c_1  a_2+ c_2  a_1    \;.
\end{gather*}
The $*$-map \eqref{corris} combined with \eqref{s4ins7} then provides the
identification with the generators of $\A(S^4_\theta)$. Again, the general theory of noncommutative principal bundles over quantum homogeneous spaces of \cite{BM93} gives that the inclusion $\A(S^4_\theta)\into\A(\Sp_\theta(2))$ is a noncommutative principal bundle with $\pi_{J_\theta}(\A(\Sp_\theta(2))$ as structure group.  It is a deformation of the principal bundle over $S^4$ with total space $\Sp(2)$ and structure group $\Sp(1)\times \Sp(1)$.

\section{Noncommutative conformal transformations}\label{se:inflated}

There is a natural coaction $\Delta_L$ of $\A(\SL_\theta(2,\IH))$ on
the $\SU(2)$ noncommutative principal fibration $\A(\S^4) \into \A(\S^7)$ of
Sect.~\ref{se:principal}. Since the matrix $u$ in \eqref{utheta}
consists of two deformed quaternions, the left coaction $\Delta_L$ of $\A(\SL_\theta(2,\IH))$ in \eqref{transf}
can be written on $\A(S^7_\theta)$ as
\beq\label{transu}
\Delta_L : \A(S^7_\theta) \to \A(\SL_\theta(2,\IH)) \ot \A(S^7_\theta),
\quad u\mapsto \widetilde u := \Delta_L (u)= A_\theta \pot u  ,
\eeq
or, in  components,
\beq \label{left-coaction:theta}
u_{ia} \mapsto \widetilde u_{ia} := \Delta_L(u_{ia}) = \sum\nolimits_j A_{ij} \otimes u_{ja}.
\eeq
We have already mentioned that the left coaction $\Delta_L$ of $\A(\SL_\theta(2,\IH))$ as in \eqref{transf} does not leave invariant the spherical relation:  $\Delta_L(\sum_j z_j^* z_j)
\not= 1\ot 1$.
We will denote by $\A(\widetilde S^7_\theta)$ the image of $\A(\S^7)$ under the left
coaction of $\A(\SL_\theta(2,\IH))$: it is a subalgebra of
$\A(\SL_\theta(2,\IH)) \otimes \A(\S^7)$.
We think of $\A(\widetilde S^7_\theta)$ as a $\theta$-deformation of
a family of `inflated' spheres. Since $\sum_j z_j^* z_j$ is central in $\A(\S^7)$ its image
\beq \label{def:rho} \rho^2:=\Delta_L(\sum\nolimits_j  z_j^*  z_j),
\eeq
is a central element in $\A(\widetilde S^7_\theta)$ that  parametrizes a
family of noncommutative 7-spheres $\widetilde S^7_\theta$.
By evaluating $\rho^2$ as any real number $r^2 \in \IR$,
we obtain an algebra $\A(S^7_{\theta,r})$ which is a deformation
of the algebra of polynomials on a sphere of radius $r$.

\begin{rem}\label{rem:rho}
As expected, the coaction of the quantum subgroup $\A(\Sp_\theta(2))$
does not `inflate the spheres', {i.e.} $\rho^2=1\ot 1$ in this case.
Indeed, if $A_{\theta}=(A_{ij})$ is the defining
matrix of $\A(\Sp_\theta (2))$, one gets
$$
(u^*u)_{ab} \mapsto \sum\nolimits_{ijl} (A^*)_{li} A_{ij} \ot (u^*)_{al} u_{jb} =
\sum\nolimits_{jl} \delta_{lj}\ot (u^*)_{al}u_{jb} = 1 \ot (u^*u)_{ab} \; ,
$$
which gives $\sum_j  z_j^*  z_j \mapsto 1\ot \sum_j  z_j^*  z_j$.
Hence, both $\A(\S^7)$ and $\A(\S^4)$ are $\A(\Sp_\theta(2))$-comodule $*$-algebras.
Using the identification \eqref{corris} one sees that the coaction of
$\A(\Sp_\theta(2))$ on $\A(\S^7)$ is the restriction of the coproduct of $\A(\Sp_\theta(2))$  to the first column of $A_\theta$, i.e. to the algebra of coinvariants $\A(\Sp_\theta(2))^{co(\A(\Sp(1))}$.
\end{rem}

Next, we define a right action of $\SU(2)$ on $\A(\widetilde{S}^7_\theta)$ in such a way that the corresponding algebra of invariants describes a family of noncommutative 4-spheres. It is natural to require that the above left coaction of $\A(\SL_\theta(2,\IH))$ on $\A(\S^7)$ intertwines the right action
of $\SU(2)$ on $\A(\widetilde{S}^7_\theta)$ with the action of $\SU(2)$ on $\A(\S^7)$.

The algebra $\A(\widetilde{S}^7_\theta)$ is generated by elements
$\{w_j, w_j^*, j=1,\dots,4\}$, the $w_j$'s being as in \eqref{transf} but with `coefficients' in
$\A(\SL_\theta(2,\IH))$. Clearly, $\sum_j w_j^* w_j = \rho^2$.
Then, the algebra of invariants of the action of $\SU(2)$ on $\A(\widetilde{S}^7_\theta)$ is generated by
\begin{eqnarray}\label{inflates4ins7}
&&
\widetilde{x}=\wuno \wunob  + \wdue \wdueb- \wtre \wtreb - \wqu \wqub \; ,
 \nn \\
&&
\ta=2(\wuno \wtreb + \wdue \wqub) \, , \qquad
\tb= 2(-\wuno \wqu + \wdue \wtre) \,,
\end{eqnarray}
together with $\rho^2$. This is so because the elements \eqref{inflates4ins7} are the images under the map $\Delta_L$ of the elements \eqref{s4ins7} that generate the algebra of invariants under the action of $\SU(2)$ on $\A(\S^7)$. This correspondence also gives for their commutation relations the same expressions as the ones in \eqref{4sphere} for the generators of $\A(\S^4)$. The difference is that we do not have the spherical relation of $\A(\S^4)$ any longer but rather we find that
\beq\label{infradius}
\widetilde{\alpha}^* \widetilde{\alpha} + \widetilde{\beta}^* \widetilde{\beta} + \widetilde x^2 = \big(\sum\nolimits_j w_j^* w_j\big)^2 = \rho^4.
\eeq
We denote by $\A(\widetilde S^4_\theta) \subset \A(\SL_\theta(2,\IH)) \ot \A(S^4_\theta) $ the algebra of invariants and conclude that the coaction $\Delta_L$ of $\A(\SL_\theta(2,\IH))$ on the $\SU(2)$ principal  fibration $\A(\S^4) \into \A(\S^7)$ generates a family of $\SU(2)$ principal  fibrations $\A(\widetilde S^4_\theta) \into \A(\widetilde S^7_\theta)$. Evaluating the central element $\rho^2$, for any $r \in \IR$ we get an $\SU(2)$ principal  fibration $\A(S_{\theta,r}^4) \into \A(S_{\theta,r}^7)$ of spheres of radius $r^2$ and $r$ respectively.

\bigskip
Motivated by the interpretation of the Hopf algebra $\SL_\theta(2,\IH)$ as a parameter space (see Sect.~\ref{se:family}), the coaction of $\A(\SL_\theta(2,\mathbb{H}))$ is extended to the forms $\Omega(S^4_\theta)$ by requiring that it commutes with $\dd$, {i.e.} $\Delta_L (\dd \omega) = (\id \otimes \dd ) \Delta_L(\omega)$, thus extending the differential of $\A(\S^4)$ to $\A(\SL_\theta(2,\IH)) \otimes \A(\S^4)$ as $(\id \otimes \dd)$.
Having these, we have the following characterization of $\A(\SL_\theta(2,\mathbb{H}))$ as conformal transformations. 
\begin{prop}
\label{prop:conf}
With $\ast_\theta$ the natural Hodge operator on $\S^4$, the algebra
$\A(\SL_\theta(2,\mathbb{H}))$ coacts by conformal transformations on $\Omega(S^4_\theta)$, that is
$$
\Delta_L ( *_\theta \omega)= (\id \ot *_\theta) \Delta_L(\omega) \;
, \qquad \forall \; \omega \in \Omega (S^4_\theta) \; .
 $$
\end{prop}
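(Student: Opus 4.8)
The plan is to deduce the identity from its classical ($\theta=0$) counterpart through the standard mechanism of $\theta$-deformations, the classical input being that $\SL(2,\IH)$ is the conformal group of $S^4$. By $\IC$-linearity it suffices to treat $\omega$ homogeneous for the torus $\IT^2$, and I shall concentrate on form-degree two, which is the case that enters Sect.~\ref{se:family} (sending the self-dual curvature of the basic instanton to a self-dual curvature). A purely computational alternative exists --- evaluate $*_\theta$ on an explicit basis of $\Omega^2(S^4_\theta)$ built from $\dd x,\dd\alpha,\dd\beta$ and their conjugates, apply $\Delta_L$ via \eqref{transf}, and match with $(\id\ot*_\theta)\Delta_L$ term by term --- but it is long, so I take the conceptual route.

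First I would fix the deformation picture. As $\IC$-vector spaces $\Omega(S^4_\theta)$ and $\A(\SL_\theta(2,\IH))\ot\Omega(S^4_\theta)$ are the undeformed ones: only the products are twisted, the twist on a pair of homogeneous elements being the phase of \eqref{eq:star-product} (for $\IT^2$ on $S^4$, resp.\ $\IT^4$ on $\SL(2,\IH)$, cf.\ \eqref{act;co}). Since $\IT^2$ acts on $S^4$ by isometries of the round metric, the classical Hodge operator preserves the bidegree decomposition, so $*_\theta$ is the classical $*$ as a linear map and commutes with the grading. One then checks, on the generators $x,\alpha,\beta$, their conjugates and their differentials, that the extended coaction $\Delta_L$ is a $*$-preserving graded-algebra map which moreover respects the grading: a bidegree-$r$ form on $S^4_\theta$ is sent to a sum $\sum_s h_s\ot\omega_s$ with $h_s\in\A(\SL_\theta(2,\IH))$ homogeneous and each $\omega_s\in\Omega(S^4_\theta)$ again of bidegree $r$, the two families of degrees being tied together by the multidegrees $\Lambda_i\oplus(-\Lambda_j)$ of the entries $A_{ij}$.

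Granting this bookkeeping, the proof closes at once. In $\Delta_L(*_\theta\omega)$ and in $(\id\ot*_\theta)\Delta_L(\omega)$ the deformed product enters only when assembling image forms out of the $\widetilde u_{ia}$ of \eqref{left-coaction:theta}, and by the previous paragraph the factors involved carry the same bidegrees in the two expressions; hence the Moyal phases coincide and cancel, and the identity for $\theta\neq0$ follows from the one for $\theta=0$, just as for the other structures transported in this paper (see \cite{cl,cd,Ri93a}). For $\theta=0$, $\Delta_L$ is dual to the action of $\SL(2,\IH)$ on $S^4\simeq\IP^1\IH$ by orientation-preserving conformal diffeomorphisms (cf.\ \cite{atiyah}), and a conformal diffeomorphism of a four-manifold commutes with the Hodge operator on two-forms --- equivalently preserves $\Omega^2=\Omega^2_+\op\Omega^2_-$ --- because the conformal weight $n-2k$ of $*$ on $\Omega^k$ vanishes for $n=4$, $k=2$. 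That is the required classical input.

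The main obstacle is the grading step of the second paragraph: one has to be careful that the extension of $\Delta_L$ to $\Omega(S^4_\theta)$ by commutation with $\dd$ is well defined as a $*$-algebra map --- the calculus being generated by $\A(S^4_\theta)$ together with $\dd\A(S^4_\theta)$, this is a compatibility check with its relations --- and that it leaves the $\Omega(S^4_\theta)$-bidegree unchanged. Once that is settled, nothing beyond the classical conformal invariance and the general isospectral-deformation principle is needed.
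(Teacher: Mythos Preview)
Your overall strategy --- reduce to the classical conformal invariance via the vector-space identification underlying the $\theta$-deformation --- is exactly the paper's. The paper's proof, however, is one line shorter and avoids a mistake in your bookkeeping.

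The claim in your second paragraph that $\Delta_L$ sends a bidegree-$r$ element to $\sum_s h_s\ot\omega_s$ with each $\omega_s$ \emph{again of bidegree $r$} is false. Already on functions, $\alpha$ has $\IT^2$-bidegree $(1,0)$, yet $\widetilde\alpha=\Delta_L(\alpha)$ in \eqref{transf4} has second-leg components $1,x,\alpha,\alpha^*,\beta,\beta^*$ of bidegrees $(0,0),(0,0),(1,0),(-1,0),(0,1),(0,-1)$. What is preserved is a combined grading involving the $\IT^4$-degree on the $\SL_\theta(2,\IH)$ leg together with the $\IT^2$-degree on the $S^4_\theta$ leg, not the $S^4_\theta$-bidegree alone. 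So your phase-matching argument, as written, does not go through.

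The paper sidesteps this entirely. Since the identity to be proved is an equality of \emph{linear} maps --- no products of image elements are formed --- one only needs that $\Delta_L$ and $\ast_\theta$ coincide with their classical counterparts as $\IC$-linear maps. For $\ast_\theta$ you already said this. For $\Delta_L$ it is the standard fact about these cocycle/Rieffel deformations (cf.\ \cite{Ri93a,varilly,cd}): the comodule-algebra map is undeformed as a linear map, only the products on source and target change. Granting that, $\Delta_L\circ\ast_\theta=(\id\ot\ast_\theta)\circ\Delta_L$ is literally the classical identity, and you are done without ever tracking Moyal phases. Your restriction to two-forms is then also unnecessary (though it is the case that matters for the application).
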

\begin{proof}
The map $\Delta_L$ is given by the classical coaction of $\A(\SL(2,\IH))$ on $\Omega(S^4)$ {\it as vector spaces} and only the two products on $\A(\SL(2,\IH))$ and $\Omega(S^4)$ are deformed.
Since $\ast_\theta$ coincides with the undeformed Hodge operator $\ast$ on $\Omega(S^4_\theta) \simeq \Omega(S^4)$ as vector spaces, the result follows from the fact that $\SL(2,\IH)$ acts by conformal transformations on $S^4$.
\end{proof}

\subsection{The quantum group $\SO_\theta(5,1)$}

By construction, the generators $\widetilde\alpha,\widetilde\beta,\widetilde x$ of $\A(\widetilde S^4_\theta)$ are the images under $\Delta_L$ of the corresponding $\alpha,\beta,x$ of $\A(\S^4)$. Some algebra yields
\begin{align*}
2\; \tx & = (a_1 \ba_1 + a_2 \ba_2 + b_1 \bb_1 + b_2 \bb_2 - c_1\bc_1 - c_2 \bc_2
- d_1 \bd_1 - d_2 \bd_2) \ot 1 \\
& ~+ (a_1 \ba_1 + a_2 \ba_2 - b_1 \bb_1 - b_2 \bb_2 - c_1\bc_1 - c_2 \bc_2
+ d_1 \bd_1 + d_2 \bd_2) \ot x \\
& ~+ (a_1 \bb_1 + \mu \, b_2 \ba_2 - c_1 \bd_1 - \mu \, d_2 \bc_2)  \otimes \alpha
+ (b_1 \ba_1 + \bar\mu \, a_2 \bb_2 - d_1 \bc_1 -
\bar\mu \, c_2 \bd_2)  \otimes \alpha^* \\
&  ~+ (a_1 \bb_{2} - \bar\mu \, b_1 \ba_2 - c_1 \bd_2 + \bar\mu \, d_1 \bc_2) \otimes  \beta + (b_2 \ba_1 - \mu \, a_2 \bb_1 - d_2 \bc_1 + \mu \, c_2 \bd_1) \otimes  \beta^*  \; ,
\end{align*}
\begin{align}\label{transf4}
\ta &= (a_1 \bc_1 +a_2 \bc_2 + b_1 \bd_1 + b_2 \bd_2) \otimes 1
+ (a_1 \bc_1 +a_2 \bc_2 - b_1 \bd_1 - b_2 \bd_2) \otimes x  \qquad\qquad \nn
\\
&  ~ + (a_1 \bd_1 + \mu \, b_2 \bc_2) \otimes \alpha
 + ( b_1 \bc_1+ \bar\mu \, a_2 \bd_2 ) \otimes \alpha^*   \nn
\\
& ~ + (a_1 \bd_2 - \bar\mu \, b_1 \bc_2) \otimes \beta + ( b_2 \bc_1- \mu \, a_2 \bd_1 ) \otimes  \beta^* \; ,
\end{align}
\begin{align*}
\tb &= (a_2 c_1 -a_1 c_2 + b_2 d_1 -b_1 d_2)  \otimes 1 +
(a_2 c_1 -a_1 c_2 - b_2 d_1 + b_1 d_2)  \otimes x   \qquad\qquad
\\
& ~ + (-a_1 d_2 + \mu \, b_2 c_1) \otimes \alpha
+ ( -b_1 c_2+ \bar\mu \, a_2 d_1) \otimes  \alpha^* \\
&~  + (a_1 d_1 - \bar\mu \, b_1 c_1) \otimes \beta + ( - b_2 c_2+\mu \, a_2 d_2 ) \otimes \beta^* \; .
\end{align*}
{}From the definition of $\rho$ in \eqref{def:rho}, using the commutation relations \eqref{commutus}, it follows that
\begin{align*} 2 \rho^2
& = \Delta_L ((u^*u)_{11} + (u^*u)_{22})
= \sum\nolimits_{ilk} (A_{il})^*A_{ik} \ot ((u_{l1})^* u_{k1} + (u_{l2})^* u_{k2}) \\
& = \sum\nolimits_{ilk} (A_{il})^*A_{ik} \ot \eta_{lk} (u_{k1} (u_{l1})^*
 + u_{k2} (u_{l2})^*) = \sum\nolimits_{ilk} (A_{il})^*A_{ik} \eta_{lk} \, \ot (u u^*)_{kl},
 \end{align*}
and, being $(u u^*)_{kl}$ the component $p_{kl}$ of the defining projector $p$ in \eqref{basicp},  an explicit computation yields
\begin{align} \label{rho-formula}
2 \rho^2 & = (a_1 \ba_1 + a_2 \ba_2 + c_1\bc_1 + c_2 \bc_2 + b_1 \bb_1 + b_2 \bb_2
+ d_1 \bd_1 + d_2 \bd_2) \ot 1 \nn \\
& ~+ (a_1 \ba_1 + a_2 \ba_2 + c_1\bc_1 + c_2 \bc_2 - b_1 \bb_1 - b_2 \bb_2
- d_1 \bd_1 - d_2 \bd_2) \ot x \nn \\
& ~+ (a_1 \bb_1 + \mu \, b_2 \ba_2 + c_1 \bd_1 + \mu \, d_2 \bc_2)  \otimes \alpha
+ (b_1 \ba_1 + \bar\mu \, a_2 \bb_2 + d_1 \bc_1 + \bar\mu \, c_2 \bd_2)  \otimes \alpha^* \nn \\
&  ~+ (a_1 \bb_{2} - \bar\mu \, b_1 \ba_2 + c_1 \bd_2 - \bar\mu \, d_1 \bc_2) \otimes  \beta + (b_2 \ba_1 - \mu \, a_2 \bb_1 + d_2 \bc_1 - \mu \, c_2 \bd_1) \otimes  \beta^*  \; .
\end{align}
In the expressions \eqref{transf4} and \eqref{rho-formula}, the elements of $\A(\SL_\theta(2,\IH))$ appear only quadratically. Rather than a coaction of $\A(\SL_\theta(2,\IH))$, on $\A(\S^4)$ there is a coaction of the $\IZ^2$-invariant subalgebra. We denote this by
$\A(\SO_\theta(5,1))$, a notation that will become clear presently.

In $\A(\IC^4_\theta)$, let us consider  the vector-valued function $X:=(r, x, \alpha, \alpha^*, \beta,
\beta^*)^t$, with  $r:=\uno
\unob + \due \dueb + \tre \treb + \qu \qub$ and
$x, \alpha, \beta$ are the quadratic elements, with the same formal expression as in \eqref{s4ins7}, but with the $z_\mu$'s in
$\A(\IC^4_\theta)$ (that is we do not impose any spherical relations).
A little algebra shows that they satisfy the condition
$$
\sum_{ij} g^{ij} X_i X_j = 0 \;, \qquad \mathrm{or} \qquad X^t g X = 0 \;,
$$
where $g$ is the metric on $\IR^6$ with signature $(5,1)$,
{i.e.} on $\IR^{5,1}$. In terms of the basis
$\{X_i\}_{i=1,\ldots,6}$ and with the natural identification of
$\IR^{5,1}$ with $\IR^{1,1} \oplus \IC^2$, this metric becomes
\beq\label{metricg}
g= \left( \half g^{ij} \right) =
\half \diag\left(
\begin{pmatrix} -2 & 0 \\ 0 & 2 \end{pmatrix},
\begin{pmatrix} 0 & 1 \\  1 & 0 \end{pmatrix},
\begin{pmatrix} 0 & 1 \\  1 & 0 \end{pmatrix}
\right) .
\eeq
The coaction in \eqref{aactc4} can be given on these quadratic
elements and  summarized by $\Delta_L(X_i) = \sum_j C_{ij} \otimes
X_j$. Here
the $C_{ij}$'s -- assembled in a matrix $C_\theta$ --, are
($\IZ^2$-invariant) elements in $\A(\SL_\theta(2,\IH))$ whose 
expression can be
read off from equations \eqref{transf4} and \eqref{rho-formula} simply
reading $r$ instead of $1$. Their commutation relations are obtained from the \eqref{algAtheta}:
$$
C_{il}C_{jm}= \nu_{ij}\nu_{ml}C_{jm}C_{il},
$$ 
where the matrix $\nu=(\nu_{ij})$ has entries all equal to $1$ except for 
$$
\nu_{35}= \nu_{46}= \nu_{54}=
\nu_{63}= \lambda \;, \quad \nu_{36}= \nu_{45}= \nu_{53}= \nu_{64}= \bar \lambda \;.
$$
There are two additional properties of the matrix $C_\theta$. 
The first one is that
\beq\label{cgc}
{C_\theta}^t \,g \,C_\theta =g \;,
\eeq
as we shall now prove. In order to simplify computations for this, we shall rearrange the generators and use, instead of $X$, the vector  
 $$
 Y = (\pi_{12} , \pi_{34}, \pi_{14}, \pi_{23},   \pi_{13}, \pi_{24}),
 $$
where the $\pi_{ij}$'s are the 2-minors of
the matrix $u$ in \eqref{utheta}: 
$$
\pi_{ij}:=u_{i1}u_{j2}- u_{i2}
u_{j1} \; ,  \qquad i<j \; , \;\;  i,j =1,\dots 4 \; .
$$
The relations with the $X_i$'s are 
\begin{eqnarray*}
X_1= Y_1 + Y_2 \; , & X_2= Y_1 - Y_2 \; , & X_3 =  2 Y_{3} \; , \\
X_4=  - 2 \mu\, Y_4 \; , & X_5= -2 Y_5  \; , &X_6= -2 \bar \mu\, Y_6 \;.
\end{eqnarray*}
On the generators $\pi_{ij}$'s, the coaction $\Delta_L$
in \eqref{left-coaction:theta} simply reads 
$$ 
\pi_{ij} \mapsto \Delta_L(\pi_{ij})=\sum_{\stackrel{l,s=1 \dots
4}{l<s}} {m_{ij}}^{ls} \ot \pi_{ls} \;,
$$
with the $m$'s given by the 2-minors of the matrix $A_\theta$:
\beq \label{minori}
{m_{ij}}^{ls} =
A_{il} A_{js} - \eta_{ls} A_{is} A_{jl} \;, \qquad l<s \; , i<j \;
.\eeq
With the generators $\pi_{ij}$'s the condition $X^t gX =0$ translates to
 \beq \label{pl}
\pi_{12}\pi_{34}+ \mu \,\pi_{14} \pi_{23} - \bar\mu \,\pi_{13}
\pi_{24}=0 \;,
\eeq
which, at the classical value of the deformation parameter, $\mu=\lambda=1$, 
is the Pl\"ucker quadric \cite{atiyah}. In turn, by rewriting the metric, this condition can be written as
\beq\label{metric}
Y^t h Y =0 \;, \quad \mathrm{with} \quad
h = \left( \half h^{IJ} \right) =
\half \diag \left(
\begin{pmatrix} 0 & 1 \\ 1 & 0 \end{pmatrix},
\begin{pmatrix} 0 & \mu \\  \mu & 0 \end{pmatrix},
\begin{pmatrix} 0 & -\bar \mu \\  -\bar \mu & 0 \end{pmatrix}
\right).
\eeq
Here and in the following we use capital letters to denote indices 
$$
I \in \{1=(12), \, 2=(34), \, 3=(14), \, 4=(23), \, 5=(13), \, 6=(24)\} .
$$

The statement in \eqref{cgc} that ${C_\theta}^t \,g \,C_\theta = g$ is equivalent to the 
following proposition whose proof is given in App.~\ref{Ap-B}.
\begin{prop}\label{prop:metric}
The minors in \eqref{minori} and the metric in \eqref{metric} satisfy the condition,
$$
\sum\nolimits_{I J}h^{IJ} ~ {m_I}^K ~{m_J}^L = h^{KL} .
$$
\end{prop}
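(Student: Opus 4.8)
The plan is to reduce the claimed identity $\sum_{IJ} h^{IJ}\, {m_I}^K\, {m_J}^L = h^{KL}$ to a known fact about the undeformed Pl\"ucker embedding, exactly in the spirit of the proof of Proposition~\ref{prop:conf}. The key observation is that each 2-minor ${m_I}^K$ as defined in \eqref{minori} is, as an element of the vector space $\A(\SL(2,\IH))$, the \emph{classical} minor of the undeformed matrix $A_{\theta=0}$, possibly corrected by a single scalar phase coming from the $\eta_{ls}$ factor; likewise the entries $h^{IJ}$ differ from the classical symplectic form $h^{IJ}_{\class}$ only by the scalar phases $\mu,\bar\mu$. Because the product appearing on the left-hand side couples homogeneous components, each monomial in the sum picks up a net phase, and the content of the proposition is precisely that all these phases conspire to reproduce $h^{KL}$ once the classical identity $\sum_{IJ} h^{IJ}_{\class}\,{m_I}^K_{\class}\,{m_J}^L_{\class} = h^{KL}_{\class}$ holds.

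Concretely, I would proceed in the following steps. First, record the bidegrees: by the computation following \eqref{Atheta}, $A_{ij}$ has multidegree $\Lambda_i \oplus(-\Lambda_j)$, so the minor $m_I{}^K = {m_{(i<j)}}^{(l<s)}$ carries multidegree $(\Lambda_i+\Lambda_j)\oplus(-\Lambda_l-\Lambda_s)$. With $\Lambda=((1,0),(-1,0),(0,1),(0,-1))$ one sees that the six sums $\Lambda_i+\Lambda_j$ for the six pairs in the ordering $I\in\{(12),(34),(14),(23),(13),(24)\}$ are exactly $(0,0),(0,0),(1,-1),(-1,1),(1,1),(-1,-1)$ — i.e. the indices $I$ are grouped into the same three hyperbolic blocks that appear in the metric $h$. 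Second, note that $h^{IJ}$ is nonzero only when $I,J$ lie in the same block and are the two distinct elements of it; in each such block the nonzero entry is $1$, $\mu$ or $-\bar\mu$. Third, expand ${m_I}^K$ using \eqref{minori} and the commutation relations \eqref{algAtheta}; since the algebra generated by the $a_{ij}$ is commutative and similarly for the $b,c,d$ blocks, the only noncommutativity in a product $m_I{}^K m_J{}^L$ comes from crossing an $a$- (or $b$-) type factor past a $c$- (or $d$-) type factor, and each such crossing contributes precisely one factor of $\eta$. Fourth, assemble these phases together with the phase $\mu^{\pm1}$ carried by $h^{IJ}$ and by the $\eta_{ls}$ inside $m_I{}^K$, and check that the total phase in front of each classical monomial equals the phase attached to the corresponding classical monomial on the right-hand side $h^{KL}$. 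Finally, invoke the classical Pl\"ucker/symplectic identity — which is just the statement that $\mathrm{SL}(2,\IH)$ preserves the Pl\"ucker quadric form on $\wedge^2\IC^4$, equivalently that $C^t_{\theta=0}\,g\,C_{\theta=0}=g$ classically — to conclude.

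The main obstacle, and the part that genuinely requires care rather than being automatic, is the bookkeeping of phases in the middle steps: one has to verify that for every index pair $(K,L)$ for which $h^{KL}\neq0$ (so $K,L$ sit in the same block), and for every term surviving in $\sum_I h^{II'}\,{m_I}^K\,{m_{I'}}^L$, the product of (i) the phase in $h^{II'}$, (ii) the phases $\eta_{ls}$ hidden in the two minors, and (iii) the reordering phase from \eqref{algAtheta} needed to bring the monomial into the standard classical form, is exactly the phase of the matching classical monomial in $h^{KL}$, which is $1$, $\mu$ or $-\bar\mu$ according to the block. One expects this to work out because the $C_{ij}$ satisfy the uniform commutation rule $C_{il}C_{jm}=\nu_{ij}\nu_{ml}C_{jm}C_{il}$ with the specific matrix $\nu$ displayed before \eqref{cgc}, and that rule is exactly what makes $C^t_\theta g C_\theta = g$ consistent as an identity of noncommutative polynomials; but turning this heuristic into the explicit phase cancellations is the lengthy computation, which is why it is deferred to App.~\ref{Ap-B}. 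An alternative, perhaps cleaner, route is to observe that $C_\theta$ is obtained from $A_\theta$ by the second exterior power (composed with the identification of the $\pi_{ij}$'s), and that taking $\wedge^2$ intertwines the $\theta$-deformed products on source and target compatibly with the respective torus actions; then $\det(A_\theta)=1$ together with the classical relation $\wedge^4 A_{\theta=0}=\det(A_{\theta=0})\,\mathrm{id}$ forces $C_\theta^t g C_\theta = \det(A_\theta)\,g = g$, with the deformation entering only through the already-verified multiplicativity of $\times_\theta$ on homogeneous components. Either way, no new structural input is needed — only the classical identity and the phase algebra governed by $\eta$.
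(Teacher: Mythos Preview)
Your outline is sound, and your Route 2 (the determinant / exterior-power argument) is essentially what the paper does for the cases $h^{KL}\neq 0$, though the paper carries it out explicitly rather than invoking an abstract compatibility of $\wedge^2$ with the deformation: it starts from the formula $\det(A_\theta)=\sum_{\sigma}(-1)^{|\sigma|}\bar\varepsilon^{\sigma}A_{\sigma(1),1}\cdots A_{\sigma(4),4}$, pairs each $\sigma$ with $(12)\sigma$ and $(34)\sigma$, and uses the relations $\bar\varepsilon^{ijkl}=\eta_{ij}\bar\varepsilon^{jikl}$ etc.\ to factor the sum into a product of two minors, obtaining $h^{KL}\det(A_\theta)=\sum_{IJ}h^{IJ}{m_I}^K{m_J}^L$ and hence the result since $\det(A_\theta)=1$; for $h^{KL}=0$ it does a brute-force check with the explicit minors, reduced by the symmetry ${m_{ij}}^{kl}=-\eta^{kl}{m_{ij}}^{lk}$ to a single subcase. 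Your Route 1 (reduce to the classical Pl\"ucker identity by tracking phases) is a genuinely different organization: your key claim that each ${m_I}^K$ equals a single scalar phase times the classical minor is correct --- the star-product phases for $A_{il}A_{js}$ and $A_{is}A_{jl}$ differ precisely by $\eta_{ls}$, which is exactly the factor inserted in \eqref{minori} --- and what remains is the verification that the residual phases from your three sources are uniform across the six $(I,J)$ pairs, which is bookkeeping comparable in length to the paper's direct manipulation. Your route buys a uniform treatment of the $h^{KL}=0$ and $h^{KL}\neq0$ cases and leverages the classical result; the paper's route stays entirely inside the deformed algebra and makes the link to $\det(A_\theta)=1$ explicit.
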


The second relevant property of the matrix $C_\theta$ concerns its determinant. An element $\det(C_\theta)$ can be defined using a differential calculus, now on $\A(\IR_\theta^{5,1})$ (with relations dictated by those in $\A(\S^4)$ except for the spherical relation) as
$$
\Delta_L ( \dd X_1 \cdots \dd X_6 ) = \det(C_\theta) \otimes \dd X_1 \cdots \dd X_6.
$$
One expects that $\det(C_\theta)$ can be expressed in terms of $\det(A_\theta)$ as defined in \eqref{det} and that it {\it should indeed be equal to 1}. Instead of checking this via a direct computation, we observe that $\dd X_1 \cdots \dd X_6$ is a central element in the differential calculus on $\A(\IR^{5,1}_\theta)$ and has a classical limit which is invariant under the torus action. In the framework of the deformation described at the end of Sect.~\ref{se:principal} the result of $\Delta_L$ on it remains undeformed and coincides with the classical coaction of $\A(\SO(5,1))$ giving indeed $\det(C_\theta)=1$.

\begin{rem}
With the two properties above, the algebra $\A(SO_\theta(5,1))$ could have been defined  without reference to $\A(\SL_\theta(2,\IH))$. The entries of the matrix $C_\theta$ are its generators with relations derived as in Sect.~\ref{sse:SL2H} by imposing that $X_{i} \mapsto \sum_jC_{ij} \otimes X_j$ respects the commutation relations of $\A(\S^4)$, except the spherical relation.
In addition, one imposes the conditions $C^t_\theta g C_\theta = g$ and $\det(C_\theta)=1$.
Of course, this algebra is isomorphic to the Hopf subalgebra of $\IZ_2$-invariants in $\A(\SL_\theta(2,\IH))$ discussed above. It could also be obtained along the lines of \cite{Ri93a,varilly} (see also the beginning of Sect. \ref{se:SL2H}) by deforming the product on $\A(\SO(5,1))$
with respect to the adjoint action of the torus $\IT^2 \subset \SO(5,1)$.
\end{rem}

\section{A noncommutative family of instantons on $S^4_\theta$}
\label{se:family}

We mentioned in Sect.~\ref{se:principal} that out of the matrix valued function $u$ in \eqref{utheta} one gets a projection $p=u^*u$, given explicitly in \eqref{ptheta}, whose Grassmannian connection $\nabla=p \circ \dd $ has self-dual curvature: $*_\theta \nabla^2=\nabla^2$. The corresponding instanton connection 1-form -- acting on equivariant maps -- is expressed in terms of $u$ as well and it is an $\su(2)$-valued 1-form on $\S^7$.  Indeed,  the $\A(\S^4)$-module $\E$ determined by $p$
is isomorphic to the $\A(\S^4)$-module of equivariant maps for the defining representation $\pi$ of $\SU(2)$ on $\IC^2$:
$$
\E \simeq \A(S^7_\theta)\boxtimes_\pi \IC^2 := \left\{ f \in \A(S^7_\theta) \ot \IC^2 : (\id \otimes \pi(g)^{-1})(f)=(\alpha_g \otimes \id) (f) \right\},
$$
whose elements we write $f=\sum_a f_a \otimes e_a$ by means of the
standard basis $\{e_1, e_2\}$ of $\IC^2$.
The connection $\nabla=p \circ \dd : \E \to \E \otimes_{\A(S^4)} \Omega(\S^4)$ becomes on the equivariant maps:
$$
\nabla(f_a)= \dd f_a + \sum\nolimits_b \omega_{ab} f_b,\qquad a,b=1,2,
$$
where the connection 1-form $\omega=(\omega_{ab})$ is found to be given by
\beq\label{basic1form}
\omega_{ab} = \half \sum\nolimits_k
\big( (u^*)_{ak} \dd u_{kb} - \dd(u^*)_{ak} u_{kb} \big) .
\eeq
One has $\omega_{ab}=-(\omega^*)_{ba}$ and
$\sum_a \omega_{aa}=0$ so that $\omega$ is in
$\Omega^1(S^7_\theta) \otimes su(2)$.

Out of the coaction of the quantum group $\SL_\theta(2,\mathbb{H})$ on the Hopf fibration on $\S^4$, we shall get a family of such connections in the sense that we explain in the next sections.

\subsection{A family of  projections}
We shall first describe a family of vector bundles over $S^4_\theta$.  This is done by giving a family of suitable projections. We know from \eqref{transu} or
\eqref{left-coaction:theta} the transformation of the matrix $u$ to $\widetilde u$ for the coaction of $\A(\SL_\theta(2,\mathbb{H}))$:
$\widetilde u_{ia} = \Delta_L(u_{ia}) = \sum_j A_{ij} \otimes u_{ja}$, with $A_\theta=(A_{ij})$ the defining matrix of $\A(\SL_\theta(2,\mathbb{H}))$. The fact that the latter does not preserve the spherical relations is also the statement that
\beq \label{utnorm}
\sum\nolimits_k(\widetilde{u}^*)_{ak} \widetilde{u}_{kb} = \Delta_L\left(\sum\nolimits_k( u^*)_{ak}  u_{kb}\right) =  \Delta_L\left(\sum\nolimits_k z_k^* z_k \right) \delta_{ab} = \rho^2 \delta_{ab},
\eeq
or $ (\widetilde u)^* \widetilde u = \rho^2 \II_2$. Then, we define $P = (P_{ij}) \in \Mat_4(\A(\widetilde S^4_\theta))$ by
\beq \label{prho}
P :=  \widetilde u \;  \rho^{-2} \; (\widetilde u)^* , \qquad \mathrm{or} \quad
P_{ij}= \rho^{-2} \sum\nolimits_a \widetilde u_{ia} (\widetilde u^*)_{aj}.
\eeq
The condition $( \widetilde u)^* \widetilde u=\rho^2 \II_2$ gives that $P$ is an idempotent; being $*$-selfadjoint it is a projection.
\begin{rem}
For the above definition, we need to enlarge the algebra $\A(\widetilde S^4_\theta)$  by adding the extra element $\rho^{-2}$, the inverse of the positive self-adjoint central element $\rho^2$.  In fact, we shall also presently need the element $\rho^{-1}=\sqrt{\rho^{-2}}$. At the smooth level this is not problematic. The algebra $C^\infty(\widetilde S^4_\theta)$ can be defined as a fixed point algebra as in \cite{cd} and one finds that the spectrum of $\rho^2$ is positive and does not  contain the point $0$.
\end{rem}
Explicitly, one finds for the projection $P$ the expression
\begin{eqnarray*}
P &=& \frac{1}{2}\, \rho^{-2}
\begin{pmatrix}
 \rho^2+ \tx & 0 & \ta & \tb
\\ 0 &   \rho^2 + \tx & -\mu\, \tb^* & \bar{\mu}\, \ta ^* \\
\ta^* & -\bar{\mu}\, \tb &  \rho^2 - \tx &0\\ \tb^* & \mu\, \ta &0
&\rho^2 - \tx \end{pmatrix} ,
\end{eqnarray*}
a matrix strikingly similar to the matrix \eqref{ptheta} for the basic projection.
The entries of the projection $P$ are in $\A(\widetilde S^4_\theta)$, that is $\A(\SL_\theta(2,\IH)) \otimes \A(S^4_\theta)$:
we interpret $P$ as a {\it noncommutative family of projections} parametrized by the noncommutative space $\SL_\theta(2,\IH)$. This is the analogue for projections of the noncommutative families of maps that were introduced and studied in \cite{woro,soltan}.
The interpretation as a noncommutative family is justified by the classical case: at $\theta = 0$, there are evaluation maps $ev_x : \A(\SL(2,\IH)) \to \IC$ and for each point
$x$ in $\SL(2,\IH)$, $(ev_x \otimes \id) P$ is a projection in $\Mat_4(\A(S^4))$, that is a bundle over $S^4$. Although there need not be enough evaluation maps available in the noncommutative case, we can still work with the whole family at once.

As mentioned, we think of the Hopf algebra $\SL_\theta(2,\IH)$ as a parameter space and we extend to $\A(\SL_\theta(2,\IH)) \otimes \A(\S^4)$ the differential of $\A(\S^4)$ as $(\id \otimes \dd)$ (and similarly, the Hodge star operator of $\A(\S^4)$ as $(\id \otimes \ast_\theta)$).  Having these, out of the projection $P$ one gets a noncommutative family of instantons.
\begin{prop}
The family of connections
$\widetilde \nabla := P \circ (\id \otimes \dd)$ has self-dual curvature 
$\widetilde\nabla^2=P ((\id \otimes \dd)P)^2$, that is,
$$
(\id \ot \ast_\theta) P ((\id \otimes \dd)P)^2 = P ((\id \otimes \dd)P)^2.
$$
\end{prop}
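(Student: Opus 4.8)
The plan is to exploit the fact that, just as in Proposition~\ref{prop:conf}, everything here is the classical construction with only products deformed, so the self-duality of the family follows from the self-duality of the basic instanton together with the conformal invariance of $\ast_\theta$ already established. Concretely, I would proceed as follows. First I would observe that the family projection $P$ in \eqref{prho} is, up to the central factor $\rho^{-2}$, nothing but $\Delta_L$ applied to $p=uu^*$: indeed $\widetilde u_{ia}=\Delta_L(u_{ia})$, so $P=\rho^{-2}\,\Delta_L(p)$, where $\Delta_L$ is the coaction extended to $\Mat_4(\A(\S^4))$ entrywise and to forms by commuting with $\dd$. The appearance of $\rho^{-2}$ is exactly what is needed for $P$ to remain idempotent, as noted after \eqref{prho}.

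Next I would show that the curvature $\widetilde\nabla^2 = P((\id\ot\dd)P)^2$ equals $\rho^{-2}$ times (the coaction of) the basic curvature $p(\dd p)^2$. Since $\rho^2$ is central and $(\id\ot\dd)\rho^{-2}$ can be computed (or one simply notes $\rho^{-2}$ commutes with everything and $P=\rho^{-2}\Delta_L(p)$ with $\Delta_L(p)$ itself an idempotent-like object up to the $\rho^2$ factor), a short manipulation using $\Delta_L(p)^2=\rho^2\Delta_L(p)$ and $(\id\ot\dd)$ being a derivation gives
$$
P\,((\id\ot\dd)P)^2 \;=\; \rho^{-2}\,\Delta_L\!\big(p(\dd p)^2\big).
$$
Here one must be a little careful with the differentials of $\rho^{-2}$, but because $\rho^2$ is central and its differential lands in the same algebra, the terms involving $\dd\rho^{-2}$ cancel against each other exactly as in the classical computation that $p(\dd p)^2$ is the curvature of $p\circ\dd$ on a non-normalized projection; alternatively one rescales $\widetilde u\mapsto \rho^{-1}\widetilde u$ at the outset so that the rescaled matrix is a genuine partial isometry and $P$ is literally $(\rho^{-1}\widetilde u)(\rho^{-1}\widetilde u)^*$, reducing to the standard identity.

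Finally, I would apply $(\id\ot\ast_\theta)$. By Proposition~\ref{prop:conf} the coaction $\Delta_L$ intertwines $\ast_\theta$ on $\Omega(S^4_\theta)$ with $(\id\ot\ast_\theta)$ on $\A(\SL_\theta(2,\IH))\ot\Omega(S^4_\theta)$, and since $\rho^{-2}$ is a central element of form-degree zero it commutes with $(\id\ot\ast_\theta)$. Hence
$$
(\id\ot\ast_\theta)\,P((\id\ot\dd)P)^2 = \rho^{-2}\,(\id\ot\ast_\theta)\,\Delta_L\big(p(\dd p)^2\big) = \rho^{-2}\,\Delta_L\big(\ast_\theta(p(\dd p)^2)\big) = \rho^{-2}\,\Delta_L\big(p(\dd p)^2\big),
$$
the last step being the self-duality $\ast_\theta(p(\dd p)^2)=p(\dd p)^2$ of the basic instanton recalled in Sect.~\ref{se:principal}. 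This is precisely the claimed identity. The main obstacle I anticipate is the bookkeeping in the second step: making sure that the non-invertibility subtleties and the $\dd\rho^{-2}$ terms are handled correctly so that $P((\id\ot\dd)P)^2$ is genuinely proportional to $\Delta_L(p(\dd p)^2)$ rather than differing by curvature-of-the-rescaling corrections; passing to the partial-isometry normalization $\rho^{-1}\widetilde u$ first is the cleanest way to dispose of this.
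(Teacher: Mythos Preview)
Your approach is essentially the paper's: invoke Proposition~\ref{prop:conf} to intertwine $\ast_\theta$ with $\Delta_L$, and then use the self-duality $\ast_\theta\big(p(\dd p)^2\big)=p(\dd p)^2$ of the basic instanton. The paper's proof is a two-line version of exactly this, stating only that the curvature $P((\id\otimes\dd)P)^2$ is the image of $p(\dd p)^2$ under the coaction; your extra care about the central factor $\rho^{-2}$ and the suggestion to pass to the normalized partial isometry $\rho^{-1}\widetilde u$ fills in bookkeeping the paper leaves implicit.
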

\begin{proof}
{}From Proposition~\ref{prop:conf} we know that $\A(\SL_\theta(2,\IH))$ coacts by conformal transformations and the curvature $\widetilde\nabla^2=P((\id \otimes \dd)P)^2$ is
the image of the curvature $p(\dd p)^2$ under the coaction of $\A(\SL_\theta(2,\IH))$.
\end{proof}
It was shown in \cite{cl} that the charge of the basic instanton $p$ is $1$. This charge was given as a pairing between the second component of the Chern character of $p$ -- an element in the cyclic homology group $\HC_4(\A(\S^4))$ -- with the fundamental class of $\S^4$ in the cyclic cohomology $\HC^4(\A(\S^4))$. The zeroth and first components of the Chern character were shown to vanish identically in $\HC_0(\A(\S^4))$ and $\HC_2(\A(\S^4))$, respectively.
We will reduce the computation of the Chern character for the family of projections $P$ to this case by proving that $P$ is equivalent to the projection $1\ot p$. Hence, we conclude that $P$ represents the same class as $1 \otimes p$ in the K-theory of the algebra $\A(\SL_\theta(2,\IH)) \otimes \A(\S^4)$.

Recall that two projections $p,q$ are {\it Murray-von Neumann equivalent} if there exists a partial isometry $V$ such that $p=V V^*$ and $q=V^* V$.
\begin{lemma}
The projection $P$ is Murray-von Neumann equivalent to the projection
$1 \otimes p$ in the algebra
$M_4 \left( \A(\SL_\theta(2,\IH)) \otimes \A(\S^4)\right)$.
\end{lemma}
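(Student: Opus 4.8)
The plan is to exhibit an explicit partial isometry $V$ intertwining $1\otimes p$ and $P$ by \emph{deforming the classical construction}. Recall that classically the basic projection is $p=uu^*$ with $u^*u=\II_2$, and the ``inflated'' projection is $P=\widetilde u\,\rho^{-2}\,\widetilde u^*$ where $\widetilde u=\Delta_L(u)$ satisfies $\widetilde u^*\widetilde u=\rho^2\II_2$. The natural candidate is to set
$$
V := \widetilde u \,\rho^{-1}\, u^* \;\in\; \Mat_4\!\left(\A(\SL_\theta(2,\IH))\otimes\A(\S^4)\right),
$$
using that $\rho^{-1}=\sqrt{\rho^{-2}}$ is available (and central) after the mild enlargement of the algebra discussed in the Remark before the Lemma. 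One then computes
$$
V^*V \;=\; u\,\rho^{-1}\,\widetilde u^*\,\widetilde u\,\rho^{-1}\,u^* \;=\; u\,\rho^{-1}\rho^{2}\rho^{-1}\,u^* \;=\; uu^* \;=\; 1\otimes p,
$$
where we used centrality of $\rho^2$ to move it past $u$ and the relation $\widetilde u^*\widetilde u=\rho^2\II_2$ from \eqref{utnorm}. Symmetrically,
$$
VV^* \;=\; \widetilde u\,\rho^{-1}\,u^*u\,\rho^{-1}\,\widetilde u^* \;=\; \widetilde u\,\rho^{-2}\,\widetilde u^* \;=\; P,
$$
since $u^*u=\II_2$ and again $\rho^{-1}$ is central. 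Thus $V$ is a partial isometry with source projection $1\otimes p$ and range projection $P$, which is exactly Murray--von Neumann equivalence.

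The only points that require care are bookkeeping rather than substance. First, one must check that $V$ (equivalently $u$, $\widetilde u$, $\rho^{-1}$) really does live in the matrix algebra over $\A(\SL_\theta(2,\IH))\otimes\A(\S^4)$: the entries of $u$ lie in $\A(S^7_\theta)$, but the relevant \emph{products} $u^*u$, $\widetilde u^*\widetilde u$, $\widetilde u\widetilde u^*$, $uu^*$ appearing in the computation all have entries in $\A(\S^4)$ (respectively $\A(\widetilde S^4_\theta)$), as already recorded for $p$ and for $P$ in Sect.~\ref{se:family}; so $V^*V$ and $VV^*$ are genuinely in the stated algebra even though $V$ itself is an intermediate object over the sphere $S^7$, which is the standard situation for these Grassmann projections. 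Second, one must make sure the manipulations moving $\rho^{-1}$ through $u$ and $u^*$ are legitimate: this is immediate because $\rho^2$ — hence $\rho^{-2}$ and its positive square root $\rho^{-1}$ — is central in $\A(\widetilde S^7_\theta)$ by \eqref{def:rho}, and $u$ has entries in (a localization of) this algebra.

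I expect the main ``obstacle'' — really the only thing to be slightly careful about — is the legitimacy of the square root $\rho^{-1}$ and its centrality: this is precisely why the preceding Remark invokes the smooth completion $C^\infty(\widetilde S^4_\theta)$, observing that the spectrum of the positive self-adjoint central element $\rho^2$ is bounded away from $0$, so $\rho^{-1}$ exists in that completion and commutes with everything $\rho^2$ commutes with. Once that is granted, the three displayed identities above close the argument with no further computation, and the Lemma follows. As a corollary (used in the next paragraph of the paper) $P$ and $1\otimes p$ define the same class in $\K_0\!\left(\A(\SL_\theta(2,\IH))\otimes\A(\S^4)\right)$, so the Chern character — and hence the instanton charge — of the whole family coincides with that of the basic instanton.
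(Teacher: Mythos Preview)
Your approach is essentially identical to the paper's: both use the same partial isometry $V=\widetilde u\,\rho^{-1}\,(1\otimes u^*)$ and verify $V^*V=1\otimes p$, $VV^*=P$ by the same two-line computation using $\widetilde u^*\widetilde u=\rho^2\II_2$ and $u^*u=\II_2$.

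One point in your write-up does need correcting, however. You describe $V$ as ``an intermediate object over the sphere $S^7$'' with only the products $V^*V$ and $VV^*$ landing in the stated algebra. If that were true the argument would actually fail: Murray--von Neumann equivalence requires the partial isometry \emph{itself} to lie in $M_4\big(\A(\SL_\theta(2,\IH))\otimes\A(\S^4)\big)$, not merely its source and range projections. Fortunately $V$ does live there. Computing entrywise,
\[
V_{ik}
=\rho^{-1}\sum_a\widetilde u_{ia}\,(1\otimes(u^*)_{ak})
=\rho^{-1}\sum_{j,a}A_{ij}\otimes u_{ja}(u^*)_{ak}
=\rho^{-1}\sum_j A_{ij}\otimes p_{jk},
\]
and since $A_{ij}\in\A(\SL_\theta(2,\IH))$, $p_{jk}\in\A(\S^4)$, and $\rho^{-1}$ belongs to (the enlargement of) $\A(\SL_\theta(2,\IH))\otimes\A(\S^4)$ discussed in the preceding Remark, the entries of $V$ are genuinely in the right algebra --- the $S^7$-generators have paired up into $p$. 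The paper makes exactly this point by \emph{introducing} $V$ via the formula $V_{ik}=\rho^{-1}\sum_j A_{ij}\otimes p_{jk}$ before rewriting it in terms of $\widetilde u$ and $u$ for the computation. With this clarification your proof is complete and matches the paper's.
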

\begin{proof}
Define the matrix $V = (V_{ik}) \in M_4 \left( \A(\SL_\theta(2,\IH)) \otimes \A(\S^4)\right)$ by
\begin{align*}
V_{ik} = \rho^{-1} A_{ij} \otimes p_{jk}=\rho^{-1} A_{ij} \otimes u_{ja} (u^*)_{ak} =
\rho^{-1}\widetilde{u}_{ia} (1\otimes (u^*)_{ak}),
\end{align*}
with $\tilde u = \left(\tilde u_{ia}\right)$ as in \eqref{transu}. For its adjoint, we have
$$
(V^*)_{ik} =  \rho^{-1}(1\otimes u_{ia})(\widetilde{u}^*)_{ak}.
$$
Then, using \eqref{utnorm}, one obtains
\begin{align*}
(V^* V) _{il} & = \sum\nolimits_k(V^*)_{ik} V_{kl} = \rho^{-2}
\sum\nolimits_{kab} (1\otimes u_{ia}) (\widetilde{u}^*)_{ak}
\widetilde{u}_{kb} (1\otimes (u^*)_{bl}) \\
& = \rho^{-2}\sum\nolimits_{ab} (1\otimes u_{ia}) (\rho^2 \delta_{ab}) (1\otimes (u^*)_{bl}) = 1 \otimes \sum\nolimits_a u_{ia}(u^*)_{al} = 1 \otimes p_{il},
\intertext{and}
(VV^*) _{il} & = \sum\nolimits_k V_{ik} (V^*)_{kl} =
\rho^{-2} \sum\nolimits_{kab}\widetilde{u}_{ia} (1\otimes (u^*)_{ak})
(1\otimes u_{kb})(\widetilde{u}^*)_{bl}
\\
& =\rho^{-2} \sum\nolimits_{kab} \widetilde{u}_{ia}(1\otimes  (u^*)_{ak} u_{kb})(\widetilde{u}^*)_{bl} = \rho^{-2} \sum\nolimits_{ab} \widetilde{u}_{ia}(1\otimes  \delta_{ab})(\widetilde{u}^*)_{bl}  \\
& = \rho^{-2} \sum\nolimits_a \widetilde{u}_{ia}(\widetilde{u}^*)_{al} =P_{il},
\end{align*}
which finishes the proof.
\end{proof}
\noindent
It follows from this lemma that the components $\ch_n(P) \in \HC_{2n}(\A(\SL_\theta(2,\IH)) \otimes \A(\S^4))$, with $n=0,1,2$, of the Chern character of $P$ coincide with the pushforwards $\phi_* \ch_n(p)$ of $\ch_n(p) \in \HC_{2n}( \A(\S^4)) $ under the algebra map
$$
\phi : \A(\S^4) \to \A(\SL_\theta(2,\IH)) \otimes \A(\S^4),\quad a \mapsto 1 \otimes a.
$$
As a consequence, both $\ch_0(P)$ and $\ch_1(P)$ are zero since $\ch_0(p)$ and $\ch_1(p)$ 
vanish \cite{cl}.
Next, we would like to compute the charge of the family of instantons by pairing $\ch_2(P)$ with the fundamental class $[\S^4] \in \HC^4( \A(\S^4))$; classically, this corresponds to an integration over $S^4$ giving a value 1 of the charge which is constant over $\SL(2,\IH)$. As said, the Chern character $\ch_2(P)$ is an element in $\HC_{4}\left(\A(\SL_\theta(2,\IH)) \otimes \A(\S^4)\right)$ which at first sight seems unsuitable to pair with an element in $\HC^4( \A(\S^4))$. However, there is a pairing between the K-theory group $\K_0\left(\A(\SL_\theta(2,\IH)) \otimes \A(\S^4 ) \right)$ and the K homology group $\K^0( \A(\S^4))$. Recall that for any algebra $\A$ from Kasparov's KK-theory one has that $\K^0(\A) = \KK(\A, \IC)$ and $\K_0( \A) = \KK( \IC, \A)$. As described in \cite[App. IV.A]{connes}, for algebras $\A,\B$ and $\C$, there is a map $\tau_C : \KK(\A,\B) \to \KK(\C \otimes \A, \C \otimes \B)$ which simply tensors a Kasparov $\A-\B$ module by $\C$ on the left. In our case we get an element $\tau_{\A(\SL_\theta(2,\IH))} [\S^4] \in \KK\left( \A(\SL_\theta(2,\IH)) \otimes \A(\S^4) , \A(\SL_\theta(2,\IH)) \right)$ which can be paired with $[P]$ via the cup product \cite{Kas80}. Thus we obtain the desired pairing: 
\begin{align*}
\KK\left( \IC ,  \A(\SL_\theta(2,\IH))  \otimes \A(\S^4) \right) & \times \KK \left( \A(\S^4) , \IC \right)   \to \KK \left( \IC ,  \A(\SL_\theta(2,\IH))\right),  \\ \left(  [P]  , [\S^4]  \right) & \mapsto \langle [P]  , \tau_{\A(\SL_\theta(2,\IH))}[\S^4] \rangle . 
\end{align*}
Having $[P] = [1 \otimes p]$, we obtain
$$
\langle [P] , \tau_{\A(\SL_\theta(2,\IH))}[\S^4] \rangle =  [1]  \otimes \langle [p]  , [\S^4]  \rangle = [1] \in \K_0(  \A(\SL_\theta(2,\IH))) ,
$$
where in the last line we used the equality $\langle [p]  , [\S^4]  \rangle = 1 $ proved in \cite{cl} too. The above is the statement that the value 1 of the topological charge is constant over the family. 

\subsection{A family of connections}

When transforming $u$ by the coaction of $\SL_\theta(2,\IH)$ in \eqref{transu}, one transforms the connection 1-form $\omega$ in \eqref{basic1form} as well to $\widetilde\omega = (\widetilde \omega_{ab})$ with,
\beq\label{transomega}
\widetilde \omega_{ab} := \Delta_L (\omega_{ab}) =  \half \sum\nolimits_{kij}
(A^*)_{ik} A_{kj} \otimes \big( (u^*)_{ai} \dd u_{jb} - \dd(u^*)_{ai} u_{jb} \big).
\eeq
Since $\Delta_L$ is linear, $\widetilde\omega$ is still traceless
($\sum_a \widetilde\omega_{aa}=0$) and skew-Hermitean
($\widetilde\omega_{ab}=-(\widetilde \omega^*)_{ba}$).

\begin{prop}
The instanton connection 1-form $\omega$ is invariant under the coaction of the quantum group $\Sp_\theta(2)$, that is for this quantum group one has
$$
\Delta_L(\omega_{ab})=1 \otimes \omega_{ab} .
$$
\end{prop}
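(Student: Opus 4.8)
The plan is to obtain the statement directly from the transformation formula \eqref{transomega} for $\widetilde\omega_{ab}=\Delta_L(\omega_{ab})$, without any fresh computation. The key point to isolate is that in \eqref{transomega} the generators of the quantum group enter \emph{only} through the contracted combination $\sum_k (A^*)_{ik}A_{kj}$, that is, through the $(i,j)$-entry of the matrix product $A_\theta^* A_\theta$. This is structural: since $\omega$ in \eqref{basic1form} is quadratic in $u$ and assembled from the two pieces $(u^*)_{ak}\,\dd u_{kb}$ and $\dd(u^*)_{ak}\,u_{kb}$, and since $\Delta_L$ is a $*$-map which is extended to forms by $\Delta_L\circ\dd=(\id\otimes\dd)\circ\Delta_L$ (so that $\dd u_{kb}\mapsto\sum_j A_{kj}\otimes\dd u_{jb}$ and $\dd(u^*)_{ak}\mapsto\sum_j (A^*)_{jk}\otimes\dd(u^*)_{aj}$), every $u$ or $\dd u$ produces a factor $A_{\bullet\bullet}$ and every $u^*$ or $\dd u^*$ produces a factor $(A^*)_{\bullet\bullet}$, and the summed-over inner index glues these into $(A_\theta^* A_\theta)_{ij}$. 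This is precisely the formula \eqref{transomega}.

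Granting this, the proof is immediate. By the very definition of $\A(\Sp_\theta(2))$ as the quotient of $\A(\SL_\theta(2,\IH))$ by the Hopf ideal generated by $\sum_k (A_{ki})^*A_{kj}-\delta_{ij}$ --- equivalently, by the relation $A_\theta^* A_\theta=\II_4$ recorded in Sect.~\ref{se:sp} --- one has $\sum_k (A^*)_{ik}A_{kj}=\delta_{ij}$ in $\A(\Sp_\theta(2))$. Substituting this into \eqref{transomega} collapses the double sum over $i,j$ to a single sum and yields
$$
\Delta_L(\omega_{ab})=\half\sum\nolimits_{i}1\otimes\big((u^*)_{ai}\,\dd u_{ib}-\dd(u^*)_{ai}\,u_{ib}\big)=1\otimes\omega_{ab},
$$
where the last equality is just the definition \eqref{basic1form} of $\omega_{ab}$.

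I do not expect a genuine obstacle here: the claim is a one-line consequence of $A_\theta^* A_\theta=\II_4$. The only points that need care are bookkeeping --- keeping the convention $(A^*)_{ik}=(A_{ki})^*$ consistent so that the contraction really produces $A_\theta^* A_\theta$ (and not $A_\theta A_\theta^*$, nor an entrywise product of stars), and recalling from Sect.~\ref{se:inflated} that the coaction on one-forms is defined precisely so that $\Delta_L$ commutes with $\dd$, which is what licenses the use of \eqref{transomega} on $\dd u_{kb}$ and $\dd(u^*)_{ak}$. Conceptually the statement is the noncommutative counterpart of the classical fact that $\Sp(2)$ preserves the Hermitean structure on $\IH^2$ out of which the basic instanton connection is built, and hence fixes it; Remark~\ref{rem:rho}, which gives $\rho^2=1\otimes 1$ for the $\Sp_\theta(2)$-coaction, is the same phenomenon seen on the base.
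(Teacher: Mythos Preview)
Your proof is correct and follows exactly the same route as the paper: it is a direct substitution of the defining relation $\sum_k (A^*)_{ik}A_{kj}=\delta_{ij}$ for $\A(\Sp_\theta(2))$ into the transformation formula \eqref{transomega}. Your additional commentary on bookkeeping and the conceptual interpretation is sound but goes beyond what the paper records.
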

\begin{proof}
This is a simple consequence of the fact that for $\Sp_\theta(2)$ one has $\sum_k  (A^*)_{ik} A_{kj} = \delta_{ij}$ from which \eqref{transomega} reduces to
$\widetilde\omega_{ab} = 1 \otimes \omega_{ab}$.
\end{proof}

Hence, the relevant space that parametrizes the connection one-forms is not $\SL_\theta(2,\IH)$ but rather the quotient of $\SL_\theta(2,\IH)$ by $\Sp_\theta(2)$. Denoting by $\pi$  the natural quotient map from $\A(\SL_\theta(2,\IH))$ to $\A(\Sp_\theta(2))$,  the algebra of the quotient is the algebra of coinvariants of the natural left coaction
$\Delta_L = (\pi \otimes \id ) \circ \Delta$
of $\Sp_\theta(2)$ on $\SL_\theta(2,\IH)$:
$$
\A(\M_\theta):= \{ a \in \A(\SL_\theta(2,\IH)) ~|~ \Delta_L(a) = 1\otimes a \}.
$$
Since $\Sp_\theta(2)$ is a quantum subgroup of $\SL_\theta(2,\IH)$ the quotient is well defined: the algebra $\A(\M_\theta)$ is a quantum homogeneous space and the inclusion $\A(\M_\theta) \into \A(\SL_\theta(2,\IH))$ is a noncommutative principal bundle with $\A(\Sp_\theta(2))$ as structure group.

\begin{lemma}
The quantum quotient space $\A(\M_\theta)$ is generated as an algebra by the elements $m_{ij}:=\sum_k (A^*)_{ik} A_{kj} = \sum_k (A_{ki})^* A_{kj}$.
\end{lemma}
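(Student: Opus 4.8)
The plan is to show a two-way inclusion between $\A(\M_\theta)$, defined as the coinvariants of the left coaction $\Delta_L = (\pi \otimes \id) \circ \Delta$ of $\A(\Sp_\theta(2))$ on $\A(\SL_\theta(2,\IH))$, and the subalgebra $\mathcal{M}_0$ generated by the elements $m_{ij} = \sum_k (A_{ki})^* A_{kj}$.

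First I would verify that each $m_{ij}$ is indeed coinvariant. Using $\Delta(A_{kj}) = \sum_l A_{kl} \otimes A_{lj}$ and the fact that $\Delta$ is a $*$-algebra map, one computes
$$
\Delta_L(m_{ij}) = \sum_{k,l,s} \pi((A_{ks})^* A_{kl}) \otimes (A_{si})^* A_{lj} = \sum_{l,s} \pi(m'_{sl}) \otimes (A_{si})^* A_{lj},
$$
where $m'_{sl} = \sum_k (A_{ks})^* A_{kl}$ are the corresponding elements in $\A(\Sp_\theta(2))$; but in $\A(\Sp_\theta(2))$ one has exactly $\sum_k (A_{ks})^* A_{kl} = \delta_{sl}$ by the defining relation $A_\theta^* A_\theta = 1$, so $\Delta_L(m_{ij}) = \sum_s 1 \otimes (A_{si})^* A_{sj} = 1 \otimes m_{ij}$. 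Hence $\mathcal{M}_0 \subseteq \A(\M_\theta)$; that $\mathcal{M}_0$ is a $*$-subalgebra is clear since $m_{ij}^* = m_{ji}$.

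The harder inclusion is $\A(\M_\theta) \subseteq \mathcal{M}_0$. Here I would exploit the toric-deformation philosophy used repeatedly in the paper: $\A(\SL_\theta(2,\IH))$ is, as a $\IT^4$-graded vector space, identical to the classical $\A(\SL(2,\IH))$, and $\Delta_L$ is the classical coaction on underlying vector spaces with only the products deformed. Therefore the coinvariant subspace $\A(\M_\theta)$ coincides as a graded vector space with the classical coinvariant subalgebra $\A(\SL(2,\IH)/\Sp(2))^{(0)}$. Classically it is known (cf. \cite{atiyah}) that the homogeneous space $\SL(2,\IH)/\Sp(2)$ of quaternionic Hermitean structures is coordinatized precisely by the entries of $A^* A$, i.e.\ $\A(\SL(2,\IH))^{co \A(\Sp(2))}$ is generated by the classical $m_{ij}$; so every homogeneous coinvariant is a polynomial in the classical $m_{ij}$. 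Transporting this back through the identity-on-vector-spaces identification, and noting that the deformed product of the generators $m_{ij}$ differs from the classical product only by scalar phases on homogeneous components (so the deformed subalgebra they generate has the same underlying graded vector space as the classical one), we conclude that every coinvariant in $\A(\SL_\theta(2,\IH))$ lies in $\mathcal{M}_0$.

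I expect the main obstacle to be making the last step rigorous: one must check that the phase factors appearing when re-expressing a classical polynomial in the $m_{ij}$ as a $\times_\theta$-polynomial do not spoil the statement — equivalently, that $\mathcal{M}_0$ and its classical counterpart have the same $\IT^4$-graded dimensions. This follows from the general observation (used already in the proof that $I \subset \A(\SL_\theta(2,\IH))$ is a Hopf ideal) that multiplying a generator by a homogeneous element in the $\times_\theta$-product merely multiplies by a complex phase, so the span of ordered monomials in the $m_{ij}$ is unchanged as a graded vector space. Combining the two inclusions gives $\A(\M_\theta) = \mathcal{M}_0$, as claimed.
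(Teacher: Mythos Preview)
Your argument is correct and in its first half (the computation that each $m_{ij}$ is coinvariant) coincides with the paper's.  For the reverse inclusion the two approaches diverge.  The paper argues heuristically: since the defining relations of $\A(\Sp_\theta(2))$ are quadratic in the $A_{ij}$ and $(A_{ij})^*$, the first leg of $(\pi\otimes\id)\Delta(a)$ can collapse to $1$ only when $a$ is built from the specific quadratic combinations $\sum_k (A_{ki})^* A_{kj}$, and leaves it at that.  You instead invoke the deformation-theoretic principle already used in the paper (in showing that the ideal $I$ is a Hopf ideal): the coproduct and the quotient map $\pi$ are the classical linear maps, so the coinvariant subspace coincides with the classical one, and since the $m_{ij}$ are homogeneous the $\times_\theta$-subalgebra they generate has the same graded pieces as classically; the classical identification of $\SL(2,\IH)/\Sp(2)$ with the space of quaternionic Hermitean forms then finishes the job.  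Your route is longer but more complete --- the paper's degree argument does not by itself exclude higher-degree coinvariants not generated by the quadratic ones, whereas your comparison with the undeformed case does.
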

\begin{proof}
Since the relations in the quotient $\A(\Sp_\theta(2))$ are quadratic in the matrix elements $A_{ij}$ and $(A_{ij})^*$, the generators of $\A(\M_\theta)$ have to be at least quadratic in them. For the first leg of the tensor product $\Delta(a)$ to involve these relations in $\A(\Sp_\theta(2))$, we need to take $a=\sum_i (A_{ik})^* A_{il}$, so that
\begin{align*}
(\pi\otimes \id) \Delta(a)
& = \sum\nolimits_{imn} \pi((A_{im})^* A_{in}) \otimes (A_{mk})^* A_{nl} \nn \\
& = \sum\nolimits_{imn} \pi((A_{im})^*) \pi(A_{in}) \otimes (A_{mk})^* A_{nl}
= \sum\nolimits_{mn} \delta_{mn} \otimes (A_{mk})^* A_{nl},
\end{align*}
giving the desired result.
\end{proof}

We will think of the transformed $\widetilde\omega$ in \eqref{transomega}
as {\it a family of connection one-forms parametrized by the noncommutative space $\M_\theta$}.  At the classical value $\theta = 0$, we get the moduli space $\M_{\theta=0}=\SL(2,\IH)/\Sp(2)$ of instantons of charge 1. For each point $x$ in $\M_{\theta=0}$, the evaluation map $ev_x : \A(\M_{\theta=0}) \to \IC$ gives an instanton connection (i.e. one with self-dual curvature) $(ev_x \otimes \id) \widetilde\omega$ on the bundle over $S^4$ described by $(ev_x \otimes \id) P$.

\subsection{The space $\mathcal{M}_\theta$ of connections and its geometry}\label{msconn}

The structure of the algebra $\A(\M_\theta)$ is deduced from that of $\A(\SL_\theta(2,\IH))$. We collect the generators
$m_{ij}=\sum_k (A_{ki})^* A_{kj}$ into a matrix $M:=(m_{ij})$.
Explicitly, one finds
\beq\label{defim}
M =
\begin{pmatrix}
m & 0 & g_1 & \bg_2
\\
0 & m & -\bar{\mu} \; g_2 & \mu \; \bg_1
\\
\bg_1 & -\mu \; \bg_2 & n & 0
\\
g_2 & \bar\mu \; g_1 & 0 & n
\end{pmatrix}
\eeq
with its entries related to those of the defining matrix $A_\theta$ in \eqref{Atheta} of $\A(\SL_\theta(2,\IH))$ by 
\begin{align}\label{mlk}
m = m^* &= \ba_1 a_1 + \ba_2 a_2 + \bc_1 c_1 + \bc_2 c_2 \; ,   \\
n = n^* &= \bb_1 b_1 + \bb_2 b_2 + \bd_1 d_1 + \bd_2 d_2  \; , \nn \\
g_1&= \ba_1 b_1 + \mu\,  \bb_2 a_2 + \bc_1 d_1 + \mu\,  \bd_2 c_2 \; ,  \nn \\
g_2 & =  \bb_2 a_1  - \mu\, \ba_2 b_1 + \bd_2 c_1 - \mu\, \bc_2 d_1 \nn
\; .
\end{align}
As for the commutation relations, one finds that both $m$ and $n$ are central:
\begin{subequations}\label{rel:M}
\beq
m~x=x~m , \quad  n~x=x~n \;  \quad \forall \; x \in \mathcal{M}_\theta ;
\eeq
that $g_1$ and $g_2$ are normal:
\beq
g_1 \bg_1 = \bg_1 g_1 , \quad g_2 \bg_2 = \bg_2 g_2 ;
\eeq
and that
\beq
  g_1 g_2 = {\mu}^2 g_2 g_1 , \quad
  g_1 \bg_2 = \bar{\mu}^2 \bg_2 g_1 .
\eeq
\end{subequations}
together with their conjugates. There is also a quadratic relation,
\beq\label{rel:M:det}
m n  - ( \bg_1 g_1 + \bg_2 g_2) = 1,
\eeq
coming from the condition $\det(A_\theta)=1$.
Indeed, one first establishes that besides the product $mn$,  also
$\bg_1 g_1 + \bg_2 g_2$ is a central
element in $\A(\mathcal{M}_\theta)$, and then computes
\begin{align*}
m n - \bg_1 g_1 + \bg_2 g_2
& =  \ba_1 a_1 \bd_1 d_1 + \ba_1 a_1 \bd_2 d_2 +
\ba_2 a_2 \bd_1 d_1 + \ba_2 a_2 \bd_2 d_2 +
\bb_1 b_1 \bc_1 c_1 + \bb_1 b_1 \bc_2 c_2
\\
& \quad + \bb_2 b_2 \bc_1 c_1 + \bb_2 b_2 \bc_2 c_2
-\ba_1 c_1 \bd_1 b_1 - \ba_1 c_1 \bd_2 b_2
-\ba_2 c_2 \bd_1 b_1 - \ba_2 c_2 \bd_2 b_2
\\
& \quad - \bb_1 d_1 \bc_1 a_1 - \bb_1 d_1 \bc_2 a_2
-\bb_2 d_2 \bc_1 a_1 - \bb_2 d_2 \bc_2 a_2
-\ba_1 \bc_2 d_2 b_1 + \ba_1 \bc_2  d_1 b_2
\\
& \quad + \ba_2 \bc_1  d_2 b_1
-\ba_2 \bc_1 d_1 b_2 - \bb_1 \bd_2  c_2 a_1
+\bb_1 \bd_2 c_1 a_2 + \bb_2 \bd_1  c_2 a_1
-\bb_2 \bd_1 c_1 a_2
\\
& = \det(A_\theta)
\end{align*}
by a direct comparison with the expression \eqref{det}.  Elements $(m_{ij})$ of the matrix $M$ enter the expression for $\rho^2$.
With $p_{kl}$ the components of the defining projector $p$ in \eqref{basicp}  and having formula \eqref{rho-formula}, one finds that
\begin{align*}
\rho^2 &= \half \sum\nolimits_{ij}
 \eta_{ij}\, m_{ij} \ot p_{ji}  \\
& = \half \left[(m+n)\ot 1 +  (m-n)\ot x  + \mu\,  g_1^* \ot \alpha +
  \bar\mu\, g_2 \ot \beta + \bar\mu\, g_1 \ot \alpha^* + \mu\, \bg_2 \ot \beta^* \right].
\end{align*}
In particular, for $A_{ij} \in \A(\Sp_\theta(2))$ one gets
$\rho^2= \frac{1}{2} (1 \ot \tr(p))= 1 \ot 1$,
as already observed in Remark~\ref{rem:rho}.

\subsection{The boundary of $\mathcal{M}_\theta$}
The defining matrix $M$ of $\mathcal{M}_\theta$ in \eqref{defim}, with the commutation relations among its entries, is strikingly similar to the defining projection $p$ of $\A(\S^4)$  in \eqref{ptheta} with the corresponding commutation relations. Clearly, the crucial difference is that while for $\A(\S^4)$ we have a spherical relation, for $\mathcal{M}_\theta$ we have the relation
\eqref{rel:M:det} which makes $\mathcal{M}_\theta$ a $\theta$-deformation of a hyperboloid in 6 dimensions. This becomes more clear if we introduce two central elements $w$ and $y$, given in terms of $m,n$ by
$$
w:=\half(m+n); \qquad y:=\half(m-n) .
$$
Relation \eqref{rel:M:det} then reads
\beq\label{relbis}
w^2 - (y^2 + \bg_1 g_1 + \bg_2 g_2) = 1,
\eeq
making evident the hyperboloid structure.
Let us examine its structure at `infinity'. We first adjoin the inverse
of $w$ to $\A(\mathcal{M}_\theta)$, and stereographically project
onto the coordinates,
$$
Y := w^{-1} y, \quad G_1 := w^{-1} g_1,  \quad
G_2 := w^{-1} g_2 .
$$
The relation \eqref{relbis} becomes,
$$
Y^2 + G^*_1 G_1 + G^*_2 G_2 = 1 - w^{-2} .
$$
Evaluating $w$ as a real number, and taking its `limit to
infinity' we get a spherical relation,
$$
Y^2 + G^*_1 G_1 + G^*_2 G_2 = 1 .
$$
By combining this with relations \eqref{rel:M}, we can conclude that at the
`boundary' of $\mathcal{M}_\theta$, we re-encounter the noncommutative 4-sphere $\A(S^4_{\theta})$ via the identification
$$
Y \leftrightarrow x \; , \qquad G_1 \leftrightarrow \alpha \;
\qquad G_2 \leftrightarrow \beta.
$$
The above construction is the analogue of the classical  structure,
in which 4-spheres are found at the boundary of the moduli space.

\section{Outlook}
We have constructed a noncommutative family of instantons of charge 1 on the noncommutative 4-sphere $\S^4$. The family is parametrized by a noncommutative space $\M_\theta$ which reduces to the moduli space of charge 1 instantons on $S^4$ in the limit when $\theta \to 0$. Although this means that $\M_\theta$ is a quantization of the moduli space $\M_{\theta=0}$, it does not imply that it is itself a space of moduli. In order to call this the moduli space of charge 1 instantons on $\S^4$ a few things must be clarified.  We mention in particular two important points that for the moment lack a proper understanding.

First of all, we are confronted with the difficulty of finding a proper notion of gauge group and gauge transformations. A naive dualization of the undeformed  construction would lead one to consider  the group of $\A(\SU(2))$-coequivariant algebra maps from the algebra $\A(\SU(2))$ to $\A(\S^7)$, equipped with the convolution product. However, since the algebra $\A(\SU(2))$ is commutative as opposed to $\A(\S^7)$, one quickly realizes that there are not so many elements in this group (an interesting open problem is to find in general a correct noncommutative analogue of the group of maps from a space $X$ to a group $G$).

The second open problem is related to the fact that one would need some sort of universality for the noncommutative family of instantons to call it a moduli space. A possible notion of universality could be defined as follows. A family of instantons parametrized by $\A(\M)$ is said to be {\it universal} if for any other noncommutative family of instantons parametrized by, say, an algebra $\B$, there exists an algebra map $\phi: \M \to \B$ such that this family can be obtained from the universal family via the map $\phi$. Again, this is the analogue of the notion of universality for noncommutative families of maps as in \cite{woro,soltan}. But it appears that, in order to prove universality for the actual family that we have constructed in the present paper, an argument along the classical lines -- involving a local construction of the moduli space from its tangent bundle \cite{ahs} -- fails here,  due to the fact that there is no natural notion of a tangent space to a noncommutative space.

Progress on both of these problems must await another time.

\subsection*{Acknowledgments}  
We thank Simon Brain, Eli Hawkins, Mark Rieffel, Lech Woronowicz  and Makoto Yamashita for useful discussions and remarks. G.L. and C.R. were partially supported by the `Italian project PRIN06 - Noncommutative geometry, quantum groups and applications'.
 C.P. gratefully acknowledges  support from MRTN-CT 2003-505078, INDAM, MKTD-CT 2004-509794 and SNF. 

%\newpage

 \appendix

\section{Explicit commutation relations}\label{Ap-acr}
For convenience, we list the explicit commutation relations \eqref{algAtheta} of the elements of the matrix \eqref{Atheta}.
The not trivial ones are the following,
 \begin{align*} 
 a_1 b_1 = \bar\mu ~b_1 a_1  \qquad\quad a_2 b_1 = \mu ~b_1 a_2
 \qquad\quad a_1 c_1 = \mu ~c_1 a_1  \qquad\quad a_2 c_1 = \mu ~c_1 a_2    \\
 a_1 b_2 = \mu ~b_2 a_1        \qquad\quad   a_2 b_2 = \bar \mu ~b_2 a_2
 \qquad\quad  a_1 c_2 = \mu ~c_2 a_1 \qquad\quad a_2 c_2 = \mu ~c_2 a_2   \\
 a_1 \bb_1 = \mu ~\bb_1 a_1      \qquad\quad   a_2 \bb_1 = \bar \mu ~\bb_1 a_2 \qquad\quad  a_1 \bc_1 = \bar \mu ~\bc_1 a_1 \qquad\quad a_2 \bc_1 = \bar \mu ~\bc_1 a_2  \\
 a_1 \bb_2 = \bar\mu ~\bb_2 a_1  \qquad\quad   a_2 \bb_2 = \mu ~\bb_2 a_2 \qquad\quad  a_1 \bc_2 = \bar\mu ~\bc_2 a_1 \qquad\quad  a_2 \bc_2 = \bar \mu ~\bc_2 a_2  
\end{align*}
\begin{align*}
 a_1 d_1 =  d_1 a_1 \qquad\quad   a_2 d_1 = \mu^2 ~d_1 a_2 
\qquad\quad   b_1 c_1 = \mu^2 ~c_1 b_1 \qquad\quad   b_2 c_1 = c_1 b_2 \\
 a_1 d_2 = \mu^2 ~d_2 a_1 \qquad\quad   a_2 d_2 =  d_2 a_2
  \qquad\quad b_1 c_2 = c_2 b_1  \qquad\quad   b_2 c_2 = \mu^2 ~c_2 b_2 \\
 a_1 \bd_1 =  \bd_1 a_1 \qquad\quad   a_2 \bd_1 = {\bar\mu}^2 ~ \bd_1 a_2 
\qquad\quad  b_1 \bc_1 = {\bar\mu}^2 ~\bc_1 b_1  \qquad\quad   b_2 \bc_1 = \bc_1 b_2 \\
 a_1 \bd_2 = {\bar\mu}^2 ~\bd_2 a_1  \qquad\quad   a_2 \bd_2 = \bd_2 a_2
 \qquad\quad  b_1 \bc_2 =  \bc_2 b_1  \qquad\quad   b_2 \bc_2 = {\bar\mu}^2 ~\bc_2 b_2 
 \end{align*}
\begin{align*}
  b_1 d_1 = \mu ~d_1 b_1  \qquad\qquad b_2 d_1 = \mu ~d_1 b_2 \qquad\quad   c_1 d_1 = \bar\mu ~d_1 c_1
  \qquad\quad   c_2 d_1 = \mu ~d_1 c_2 \\
  b_1 d_2 = \mu ~d_2 b_1 \qquad\qquad b_2 d_2 = \mu ~d_2 b_2 \qquad\quad   c_1 d_2 = \mu ~d_2 c_1
  \qquad\quad   c_2 d_2 = \bar \mu ~d_2 c_2 \\
   b_1 \bd_1 = \bar\mu ~\bd_1 b_1 \qquad\qquad
 b_2 \bd_1 = \bar \mu ~\bd_1 b_2 \qquad\quad  c_1 \bd_1 = \mu ~\bd_1 c_1      \qquad\quad   c_2 \bd_1 = \bar \mu ~\bd_1 c_2\\
 b_1 \bd_2 = \bar\mu ~\bd_2 b_1 \qquad\qquad
 b_2 \bd_2 = \bar\mu ~\bd_2 b_2 \qquad\quad  c_1 \bd_2 = \bar\mu ~\bd_2 c_1  \qquad\quad  c_2 \bd_2 = \mu ~\bd_2 c_2
 \end{align*}
 together with their conjugates.

\section{Explicit proof of Proposition~\ref{prop:metric}}\label{Ap-B}
We prove here that the minors in \eqref{minori} and the metric in \eqref{metric} satisfy the condition 
\beq\label{proB}
\sum\nolimits_{I J}h^{IJ} ~ {m_I}^K ~{m_J}^L = h^{KL} 
\eeq
of Proposition~\ref{prop:metric}. As said in the main text, this is equivalent to the fact that the defining matrix $C_\theta$ of $\A(\SO_\theta(5,1))$ satisfy ${C_\theta}^t \,g \,C=g$ with $g$ the metric in \eqref{metricg}.

We first prove \eqref{proB} when in the right hand side $h^{KL} \neq 0$, namely for the cases $(K,L)=(1,2), \,(3,4), \, (5,6)$. For this we
need the formula \eqref{det-cond} for the determinant of $A_\theta$. A little algebra show that the determinant can also be written as
\beq \label{det2} 
\det (A_\theta)=\sum_{\sigma \in S_4}
(-1)^{|\sigma|} {\bar \varepsilon}^{\sigma}
A_{\sigma(1),1}A_{\sigma(2),2}A_{\sigma(3),3}A_{\sigma(4),4} 
\eeq
where ${\bar \varepsilon}^{\sigma}=\overline{\varepsilon^{\sigma}}$. 
Also, for the tensor $\varepsilon$
we find relations: 
\beq\label{eps} 
\varepsilon^{ijkl}= \eta_{ji}
\varepsilon^{jikl} \quad ; \quad \varepsilon^{ijkl}= \eta_{lk}
\varepsilon^{ijlk} \quad ; \quad \varepsilon^{ijkl} =\eta_{kj}
\varepsilon^{ikjl} \; ,
\eeq
and analogues for $\bar \varepsilon$, 
\beq\label{epsbar}
{\bar
\varepsilon}^{ijkl}= \eta_{ij} {\bar \varepsilon}^{jikl} \quad ;
\quad {\bar \varepsilon}^{ijkl}= \eta_{kl} {\bar
\varepsilon}^{ijlk} \quad , \quad {\bar\varepsilon}^{ijkl}
=\eta_{jk} {\bar \varepsilon}^{ikjl} \; .
\eeq
Given $\sigma \in S_4$, we let $\sigma '= (12) \sigma$ and $\sigma ''=
(34) \sigma$, and compute,
\begin{align}\label{det12}
\det (A_\theta)&=\sum_{\sigma \in S_4} (-1)^{|\sigma|} {\bar \varepsilon}^{\sigma}
A_{\sigma(1),1}A_{\sigma(2),2}A_{\sigma(3),3}A_{\sigma(4),4}
\nn \\
&=\sum_{\sigma \in S_4 \setminus \sigma'} (-1)^{|\sigma|} \left( {\bar \varepsilon}^{\sigma}
 A_{\sigma(1),1}A_{\sigma(2),2}- {\bar \varepsilon}^{\sigma'} A_{\sigma(2),1}A_{\sigma(1),2}\right)
A_{\sigma(3),3}A_{\sigma(4),4}
\nn \\
&=\sum_{\sigma \in S_4 \setminus \sigma'} (-1)^{|\sigma|} \left( {\bar \varepsilon}^{\sigma}
 A_{\sigma(1),1}A_{\sigma(2),2}-  {\bar
   \varepsilon}^{\sigma'}\eta_{\sigma(1)\sigma(2)} \eta_{12} A_{\sigma(1),2}A_{\sigma(2),1}\right)
A_{\sigma(3),3}A_{\sigma(4),4}
\nn \\
&=\sum_{\sigma \in S_4 \setminus \sigma'} (-1)^{|\sigma|} {\bar \varepsilon}^{\sigma}
{m_{\sigma(1)\sigma(2)}}^{12} ~
A_{\sigma(3),3}A_{\sigma(4),4}
\nn \\
&=\sum_{\sigma \in S_4 \setminus \{\sigma' , \sigma ''\}} (-1)^{|\sigma|}
{\bar \varepsilon}^{\sigma}
 ~{m_{\sigma(1)\sigma(2)}}^{12} ~
(A_{\sigma(3),3}A_{\sigma(4),4} - \eta_{34}A_{\sigma(3),4}A_{\sigma(4),3})
\nn \\
&=\sum_{\sigma \in S_4 \setminus \{\sigma' \sigma ''\}} (-1)^{|\sigma|}
{\bar \varepsilon}^{\sigma} ~
{m_{\sigma(1)\sigma(2)}}^{12} ~
{m_{\sigma(3)\sigma(4)}}^{34} .
\end{align}
Since the ${m_{ij}}^{kl}$
where defined for $i<j, ~k<l$, we choose  $\sigma\in S_4 \setminus 
\{\sigma' \sigma ''\} $ such that $\sigma_1 < \sigma_2$ and $\sigma_3 < \sigma_4$.
Hence the sum above runs over $\sigma=\left((\sigma_1,\sigma_2), (\sigma_3, \sigma_4)\right) \in {\mathcal I} $, where
\begin{multline*}
{\mathcal I} :=\left\{ \Big((1,2),(3,4) \Big); \Big((1,3),(2,4) \Big); \Big((1,4),(2,3) \Big); \right.\\
\left. \Big((2,3),(1,4) \Big); \Big((2,4),(1,3) \Big); \Big((3,4),(1,2) \Big) \right\}.
\end{multline*}
Finally, using the explicit form of the ${\bar \varepsilon}$'s
the above formula \eqref{det12} reads
$$
\det (A_\theta) = \sum\nolimits_{IJ} h^{IJ} {m_I}^1 {m_J}^2, 
$$
and the condition $\det (A_\theta)=1$ proves \eqref{proB} for $K=1, L=2$. The relation above coincides with the 'hyperboloid' relation \eqref{rel:M:det} for the generators of the matrix $\mathcal{M}_\theta$. 

For the other two cases we use different orders for the $A_{\sigma(i)i}$ in
\eqref{det2}. Similar procedures to the one in \eqref{det12} -- and using the properties \eqref{eps} and \eqref{epsbar} -- lead to
$$
\det (A_\theta) =  \eta_{24} \sum_{\sigma \in S_4}
(-1)^{|\sigma|}\, {\bar
\varepsilon}^{\sigma}~ {m_{\sigma(1)\sigma(4)}}^{14} ~ {m_{\sigma(2)\sigma(3)}}^{23}
 \;,
$$
which gives 
$$
\mu \, \det (A_\theta) = \sum\nolimits_{IJ} h^{IJ} {m_I}^3 {m_J}^4 , 
$$
hence proving \eqref{proB} for $K=3, L=4$;  and to
$$
\det (A_\theta)=
- \eta_{23} \, \sum_{\sigma \in S_4 }
(-1)^{| \sigma |} \, {\bar
\varepsilon}^{\sigma} ~
{m_{\sigma(1)\sigma(3)}}^{13} ~ {m_{\sigma(2)\sigma(4)}}^{24} \;,
$$
which gives
$$
\bar  \mu \, \det (A_\theta) = - \sum\nolimits_{IJ} h^{IJ} {m_I}^5 {m_J}^6 ,
$$
hence proving \eqref{proB} for $K=5, L=6$. 

\bigskip
Finally, we have \eqref{proB} when $h^{KL} = 0$ in the right hand side; for these cases \eqref{proB} is  
$$
{m_{12}}^{ij}  ~ {m_{34}}^{kl} + {m_{34}}^{ij}  ~ {m_{12}}^{kl}
+ \mu\, {m_{23}}^{ij}  ~ {m_{14}}^{kl} +  \mu\, {m_{14}}^{ij}  ~
{m_{23}}^{kl} - \bar\mu\, {m_{24}}^{ij}  ~ {m_{13}}^{kl} - \bar\mu \, {m_{13}}^{ij}  ~
{m_{24}}^{kl} =0 .
$$
These can be proved with the explicit expressions  of the
${m_{ij}}^{kl}$ in \eqref{minori} and observing that the hypothesis
$I=(ij), K=(kl)$ such that $h^{IK} =0 $ implies four possibilities: $i=k$, 
$i=l$, $j=k$ or $j=l$. In addition, the relation
${m_{ij}}^{kl}= -\eta^{kl}{m_{ij}}^{lk}$ reduces the computations to just
one case, say $i=l$.

%
%\subsection*{Acknowledgments}  
%We thank Simon Brain, Eli Hawkins, Mark Rieffel, Lech Woronowicz  and Makoto Yamashita for useful discussions and remarks. G.L. and C.R. were partially supported by the `Italian project PRIN06 - Noncommutative geometry, quantum groups and applications'.
% C.P. gratefully acknowledges  support from MRTN-CT 2003-505078, INDAM, MKTD-CT 2004-509794 and SNF. 

\end{document}